\newcommand{\Cx}{\mathbb{C}}
\newcommand{\Rl}{\mathbb{R}}
\newcommand{\pard}[2]{\frac{\partial #1}{\partial #2}}
\newcommand{\pardd}[2]{\frac{\partial^2 #1}{\partial #2^2}}
\newcommand{\pards}[3]{\frac{\partial^2 #1}{\partial #2 \partial #3}}
\DeclareMathOperator{\grad}{grad}
\DeclareMathOperator*{\dv}{d}
\DeclareMathOperator{\spa}{span}
\DeclareMathOperator{\sign}{sg}
\DeclareMathOperator*{\tr}{Tr}
\newcommand{\scal}[1]{\langle #1\rangle}
\newcommand{\lie}[1]{\left[#1\right]}
\providecommand{\abs}[1]{\lvert#1\rvert}
\providecommand{\norm}[1]{\lVert#1\rVert}
\newtheorem{lemma}{Lemma}[section]
\newtheorem{theorem}{Theorem}[section]
\newtheorem{remark}{Remark}[section]
\begin{document}

\title[Special Lagrangian 4-folds with $SO(2)\rtimes S_3$-Symmetry]{Special Lagrangian 4-folds with $SO(2)\rtimes S_3$-Symmetry in Complex Space Forms}

\email[F. Dillen]{franki.dillen@wis.kuleuven.be}
\email[C. Scharlach]{schar@math.tu-berlin.de}
\email[K. Schoels]{Kristof.Schoels@wis.kuleuven.be}
\email[L. Vrancken]{luc.vrancken@univ-valenciennes.fr}

\author[F. Dillen]{Franki Dillen}
\address[F. Dillen, K. Schoels, L. Vrancken]
{Department of Mathematics\\ Katholieke Universiteit Leuven\\
Celestijnenlaan 200 B, Box 2400\\BE-3001 Leuven\\ Belgium}

\author[C. Scharlach]{Christine Scharlach}
\address[C. Scharlach]{TU Berlin\\Fakult\"at II\\
Institut f\"ur Mathematik\\
Strasse des 17. Juni 136\\
D-10623 Berlin\\Germany}

\author[K. Schoels]{Kristof Schoels}

\author[L. Vrancken]{Luc Vrancken}
\address[L. Vrancken]{Univ. Lille Nord de France, F-59000 Lille, France}
\address[L. Vrancken]{LAMAV\\ISTV2\\ Universit\'e de Valenciennes\\
Campus du Mont Houy\\ F-59313 Valenciennes Cedex 9\\ France}

\begin{abstract}
In this article we obtain a classification of special Lagrangian submanifolds in complex space forms subject to an $SO(2)\rtimes S_3$-symmetry on the second fundamental form. The algebraic structure of this form has been obtained by Marianty Ionel in \cite{mariant}. However, the classification of special Lagrangian submanifolds in $\mathbb{C}^4$ having this $SO(2)\rtimes S_3$ symmetry in \cite{mariant} is incomplete. In this paper we give a complete classification of such submanifolds, and extend the classification to special Lagrangian submanifolds of arbitrary complex space forms with $SO(2)\rtimes S_3$-symmetry.
\end{abstract}

\maketitle

\section{Introduction}
A space $(N,J,g)$ is called a Hermitian manifold with complex structure $J$ and Riemannian metric $g$, if $g(JX,JY)=g(X,Y)$ for all $X$ and $Y$. The $(0,2)$-tensor $\omega(X,Y)=g(X,JY)$ is its symplectic form. If $\omega$ is closed, then $(N,J,g)$ is said to be a K\"ahler manifold. In this case the Levi-Civita connection $D$ of $g$ satisfies $D\omega=0$ as well, see \cite{nomkob2}. A complex space form is a K\"ahler manifold for which the curvature tensor is given by
\begin{equation}
R(X,Y)Z=\epsilon \left(X\wedge Y+JX\wedge JY+2g(X,JY)J\right) Z, \label{csf}
\end{equation}
where $\epsilon$ is a real constant and $X\wedge Y$ is defined as
\begin{equation*}
(X\wedge Y)Z=g(Y,Z)X-g(X,Z)Y.
\end{equation*}
Every complete, simply connected complex space form of dimension $n$ with constant holomorphic sectional curvature $4\epsilon$ is isometric to one of the following manifolds:
\begin{enumerate}
\item the standard complex space $\Cx^n$ when $\epsilon=0$,
\item the complex projective space $\Cx P^n(4\epsilon)$ when $\epsilon>0$,
\item the complex hyperbolic space $\Cx H^n(4\epsilon)$ when $\epsilon<0$.
\end{enumerate}
Because we consider submanifolds of a complex space form locally, we can restrict ourselves to those ambient spaces. By rescaling, we can even assume that $\epsilon=0,1,-1$.

A Lagrangian submanifold $M$ of a K\"ahler manifold $(N,J,g)$ is a submanifold such that $\omega$ vanishes identically on $M$ and the (real) dimension of $M$ is half the (complex) dimension of $N$, see \cite{audin}. This implies that $J$ induces an orthogonal isomorphism between the tangent and the normal bundle on the submanifold. The Gauss formula is given by
\begin{equation*}
D_XY=\nabla_XY+h(X,Y)=\nabla_XY+JA(X,Y),
\end{equation*}
where $A=-Jh$ defines a symmetric $(1,2)$-tensor on the submanifold, and the Weingarten formula is given by
\begin{equation*}
D_X(JY)=J(\nabla_XY)-A(X,Y).
\end{equation*}
It is easy to see that the cubic form $C$, defined by
\begin{equation*}
C(X,Y,Z) = g(A(X,Y),Z)
\end{equation*}
is totally symmetric. For Lagrangian submanifolds of complex space forms, the equations of Gauss and Codazzi simplify to
\begin{align}
R(X,Y)Z&=\epsilon \left(X\wedge Y\right)Z+\lie{A_X,A_Y}Z,\label{Gauss1}\\
\nabla A&\; \mbox{is symmetric.}\label{Codazzi1}
\end{align}
The following theorem holds, see \cite{Chen} and \cite{CDVV}.
\begin{theorem}
Suppose $(M^n,g)$ is a Riemannian manifold equipped with a symmetric and $g$-symmetric $(1,2)$-tensor $A$ such that \eqref{Gauss1} and \eqref{Codazzi1} are satisfied for some constant $\epsilon$. Then for every point $p\in M$ there exists a neighborhood $U$ and a Lagrangian isometric immersion $\phi:U\rightarrow N^{2n}(4\epsilon)$ into the complex space form $N^{2n}(4\epsilon)$ such that $g$ and $JA$ are induced as first and second fundamental form. Such an immersion is unique up to isometries of the ambient space.
\end{theorem}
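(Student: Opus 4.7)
The statement is the Lagrangian analogue of Bonnet's fundamental theorem of surfaces, so my plan is to package the data $(g, A, \epsilon)$ into a connection on an auxiliary vector bundle over $M$, show that the Gauss and Codazzi equations force this connection to be flat, and recover $\phi$ from a parallel frame.

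I would begin with the flat case $\epsilon = 0$. Over $M$, form the rank-$2n$ bundle $E = TM \oplus TM$ with direct-sum metric, almost complex structure $\c{J}(Y, Z) = (-Z, Y)$, and connection
\begin{equation*}
\hat\nabla_X(Y, Z) = \bigl(\nabla_X Y - A(X, Z),\ \nabla_X Z + A(X, Y)\bigr),
\end{equation*}
the two factors modelling the tangent and (via $J$) the normal bundle of the sought immersion; the formula directly mirrors the Gauss–Weingarten equations stated in the excerpt. A direct computation shows that $\hat\nabla$ is metric and commutes with $\c J$, and that its curvature applied to $(Y, 0)$ has first component $R(X_1, X_2)Y - [A_{X_1}, A_{X_2}]Y$ and second component $(\nabla_{X_1}A)(X_2, Y) - (\nabla_{X_2}A)(X_1, Y)$ (with a symmetric expression on $(0, Z)$); thus \eqref{Gauss1} with $\epsilon = 0$ and \eqref{Codazzi1} together force $\hat\nabla$ to be flat. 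On a simply connected neighborhood $U$ of $p$, a $\hat\nabla$-parallel, $\c J$-linear, isometric frame then trivializes $E|_U \cong U \times \Cx^n$. The $E$-valued $1$-form $\theta(X) = (X, 0)$ satisfies $d^{\hat\nabla}\theta = 0$ by torsion-freeness of $\nabla$ and symmetry of $A$, so under the trivialization it becomes a closed $\Cx^n$-valued $1$-form whose primitive $\phi \colon U \to \Cx^n$ is a Lagrangian isometric immersion whose second fundamental form is $JA$.

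For $\epsilon \neq 0$ I would enlarge $E$ by two additional directions modelling the position vector (and $J$ times the position vector) in $S^{2n+1}(4\epsilon) \subset \Cx^{n+1}$ or in the anti-de Sitter space $H^{2n+1}_1(4\epsilon) \subset \Cx^{n+1}_1$, and augment $\hat\nabla$ so that the $\epsilon(X \wedge Y)$-term appearing in \eqref{Gauss1} is absorbed by the new components. Flatness of this augmented connection produces a horizontal Legendrian immersion into the total space of the (pseudo-)Hopf fibration, which descends to a Lagrangian immersion into $\Cx P^n(4\epsilon)$ or $\Cx H^n(4\epsilon)$ respectively. Uniqueness in all three cases follows the standard rigidity pattern: after composing with a preliminary ambient isometry, two candidate immersions agree to first order at $p$, and then both satisfy the same Gauss–Weingarten system of ODEs along geodesics emanating from $p$, so they coincide.

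The main technical obstacle is the bookkeeping in the case $\epsilon \neq 0$: one must choose exactly the right extra connection components so that flatness becomes equivalent to the full Gauss–Codazzi system, and one must verify that the resulting immersion into $\Cx^{n+1}$ (or $\Cx^{n+1}_1$) lands in the unit sphere and is horizontal with respect to the Hopf fibration. Once this setup is in place, the parallel-frame argument from the flat case transcribes verbatim.
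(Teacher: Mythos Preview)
The paper does not actually prove this theorem: it is stated as a known result with the attribution ``see \cite{Chen} and \cite{CDVV}'' and no argument is supplied. Your proposal therefore cannot be compared against a proof in the paper, because there is none.

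That said, your sketch is a reasonable outline of the standard Bonnet-type argument and is essentially the strategy used in the cited references. Packaging $(g,A)$ into a connection on $TM\oplus TM$ (with the extra line for $\epsilon\neq 0$), checking that the Gauss and Codazzi equations are exactly the flatness conditions, and then integrating a parallel frame is the classical route. One point to be careful about in the $\epsilon\neq 0$ case: you should verify not only that the immersion lands in the sphere (resp.\ anti-de Sitter space) and is horizontal, but also that the \emph{Ricci equation} --- which for general submanifolds is an independent integrability condition --- is automatically implied here by Gauss and Codazzi because the normal connection is identified with the tangential one via $J$. This is what makes the Lagrangian case special and is the reason only two equations appear in the hypotheses; your write-up should make that explicit rather than leave it implicit in the bundle construction.
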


We focus on a particular form of $A$ assuming that there is a pointwise $G$-symmetry of $A$ (or equivalently of the cubic form $C$), where $G$ is a subgroup of the special orthogonal group $SO(n)$. We say that $A$ has pointwise $G$-symmetry at $p$ if for all tangent vectors $X,Y$ in $p$, and all $g\in G$ the relation $A(gX,gY)=gA(X,Y)$ holds (or equivalently $C(gX,gY,gZ)=C(X,Y,Z)$ for all $X,Y,Z$). Furthermore, we impose a minimality condition on $A$ at $p$, so for every $X$ at $p$, we assume that $\tr(A_X)=0$. These manifolds are interesting, since in $\Cx^n$ the minimal Lagrangian submanifolds are precisely the special Lagrangian submanifolds of $\Cx^n$ as introduced by Harvey and Lawson \cite{harlaw}. If a special Lagrangian submanifold of $\Cx^n$ has $G$-symmetry at every point, for the same group $G$, then a classification result for the dimension equal to $3$ has been obtained by Bryant \cite{bryant}. An explicit classification for special (we also use the word ``special'' for ``minimal'' in case $c\ne 0$) Lagrangian submanifolds of complex space forms with pointwise symmetric cubic form is not yet done, but can be easily obtained from a similar classification for affine spheres in \cite{vranck}.

In the present paper we consider the $4$-dimensional case. In particular we consider special Lagrangian 4-folds in complex space forms with pointwise symmetry. The shape of the $(1,2)$-tensor $A$, invariant under subgroups of $SO(4)$, has been described by M. Ionel in \cite{mariant}. In the same article, the author classifies special Lagrangian 4-folds of $\Cx^4$ according to their symmetry groups. However, the classification in case the symmetry group is given by $SO(2)\rtimes S_3$ in that article is incomplete; several possible subcases including the most general one is omitted. In the present article, we give a complete classification of all special Lagrangian 4-folds in any complex space form having this particular symmetry.
This settles the problem for $SO(2)\rtimes S_3$-symmetry for all $\epsilon$. The classification for other symmetry groups remains open if $\epsilon\ne0$.

The $SO(2)\rtimes S_3$-symmetry implies that $A$ can be expressed as
\begin{equation}
\begin{matrix}
A(X_1,X_1)=r X_1, & A(X_1,X_2)=-r X_2, & A(X_1,X_3)=0, & A(X_1,X_4)=0,\\
A(X_2,X_1)=-r X_2, & A(X_2,X_2)=-r X_1, & A(X_2,X_3)=0, & A(X_2,X_4)=0,\\
A(X_3,X_1)=0, &A(X_3,X_2)=0, & A(X_3,X_3)=0, &A(X_3,X_4)=0,\\
A(X_4,X_1)=0, & A(X_4,X_2)=0, & A(X_4,X_3)=0, &A(X_4,X_4)=0,\label{secondform}
\end{matrix}
\end{equation}
in a well-chosen local orthonormal frame $\{X_1,X_2,X_3,X_4\}$. In this expression $r$ is a strictly positive function. The $SO(2)$-symmetry is given by the free rotation in the $\{X_3,X_4\}$ plane and the $S_3$-symmetry is essentially obtained by rotations over an angle $2\pi/3$ in the $\{X_1,X_2\}$ plane and reflections in the $\{X_1,X_4\}$ plane. We can remark that the form of $A$ is exactly that of Lagrangian submanifolds attaining equality in Chen's inequality, see \cite{chenbook} and \cite{CDVV}.

In order to list the different possible subcases, we introduce distributions
\begin{equation*}
\begin{matrix}
\mathcal{N}_1=\spa \{X_1,X_2\}, & \mathcal{N}_+=\spa \{X_1,X_2,\lie{X_1,X_2}\}, & \mathcal{N}_2=\spa \{X_3,X_4\}.
\end{matrix}
\end{equation*}
We will see that $\mathcal{N}_2$ is always integrable. We obtain:

\begin{enumerate}
\item If $\mathcal{N}_1=\mathcal{N}_+$, then the submanifold is a double warped product $\Rl\times_f \Rl\times_g N^2$ where $N^2$ is a minimal Lagrangian submanifold in an appropriate space form.
\item If $\mathcal{N}_1\subsetneq \mathcal{N}_+$ and $\mathcal{N}_+$ is integrable, then the submanifold is a single warped product $\Rl\times_f N^3$ where $N^3$ is a special Lagrangian 3-fold with $S_3$-symmetry in an appropriate space form.
\item If the smallest integrable distribution containing $\mathcal{N}_1$ is $TM$, then for this final case, we do not obtain an explicit expression for the immersion, but we will rewrite the equations \eqref{Gauss} to a system of partial differential equations in $2$ coordinates out of $4$ coordinates defined on the submanifold. Here, techniques will be used similar to those in \cite{kriele}.
\end{enumerate}

When we consider the different cases, we will assume the defining conditions hold on an open neighborhood of the considered point.

\section{Preliminaries}
\subsection{Complex space forms.}
We briefly recall the basic properties of $\Cx^n$ and show how Lagrangian submanifolds of $\Cx P^n$ and $\Cx H^n$ can be lifted to subsets of $\Cx^{n+1}$.

Consider the complex vector space $\Cx^n$. Its elements can be written as n-tuples of complex numbers, so they are given as
\begin{equation*}
\begin{matrix}
\vec{z}=\left(z_1,\cdots,z_n\right), & z_j=x_j+i y_j, & x_j,y_j\in \Rl.
\end{matrix}
\end{equation*}
Through the map
\begin{equation*}
\phi:\Cx^n\rightarrow \Rl^{2n}:(z_1,\cdots,z_n)\rightarrow (x_1,y_1,\cdots,x_n,y_n)
\end{equation*}
the space $\Cx^n$ is a real $2n$-dimensional manifold. The multiplication with the imaginary unit $i$ translates to a linear map on $\Rl^{2n}$ given as
\begin{equation*}
i\left(x_1,y_1,\cdots,x_n,y_n\right)=\left(-y_1,x_1,\cdots,-y_n,x_n\right).
\end{equation*}
and its derivative $J$ is given as
\begin{align*}
J\partial_{x_k}&=\partial_{y_k},\\
J\partial_{y_k}&=-\partial_{x_k}.
\end{align*}
This squares to $-I$ and thus defines a complex structure on $\Cx^n$. On $\Cx^n$ there is also a Hermitian form given by
\begin{equation*}
s(\vec{z},\vec{w})=\sum_{j=1}^n z_j \bar{w}_j
                =\sum_{j=1}^n(x_ju_j+y_jv_j)-i \sum_{j=1}^n (x_j v_j-y_j u_j).
\end{equation*}
The real part, which can be denoted as $\scal{\vec{z},\vec{w}}$ defines the Euclidean scalar product on $\Rl^{2n}$ and induces a natural Riemannian metric on $\Cx^n$. We can see that $J$ is an isometry and the induced K\"ahler form, which also coincides with the imaginary part of the Hermitian form, is closed. These structures make $\Cx^n$ into a flat K\"ahler manifold.

The manifold $\Cx P^n$ can be modeled as the quotient $S^{2n+1}/S^1$, where
\begin{equation*}
S^{2n+1}=\{(z_0,\cdots,z_n)\in \Cx^{n+1}|\sum_{i=0}^n\abs{z_i}^2=1\}.
\end{equation*}
The equivalence is given by
\begin{equation*}
\vec{z}\sim\vec{w} \Leftrightarrow \exists \phi\in \Rl \;\forall j\in\{0,\cdots,n\}: z_j=e^{i\phi}w_j.
\end{equation*}
So the unit sphere $S^{2n+1}$ is the preimage of the Hopf fibration
\begin{equation*}
\pi:S^{2n+1}\rightarrow\Cx P^n: \vec{z}\rightarrow\lie{\vec{z}}.
\end{equation*}
On $S^{2n+1}\subset\Cx^{n+1}$ the complex structure $J$ induces a contact structure and the standard metric on $\Cx^{n+1}$ induces a Riemannian metric. The metric on $\Cx P^n$ that makes $\pi$ a Riemannian submersion has constant holomorphic sectional curvature $4$. An immersion $\phi:M \rightarrow S^{2n+1}$ is then said to be C-totally real or horizontal if $i\phi$ is orthogonal to the submanifold. It can be shown that every minimal C-totally real submanifold of $S^{2n+1}$ can be projected onto a special Lagrangian submanifold of $\Cx P^n$ through $\pi$ and conversely that a special Lagrangian submanifold in $\Cx P^n$ has a 1-parameter family of mutually isometric horizontal lifts as a minimal C-totally real submanifold in $S^{2n+1}$. So in order to classify special Lagrangian submanifolds in $\Cx P^n$, we can consider minimal C-totally real submanifolds in $S^{2n+1}\subset \Cx^{n+1}$, see \cite{ziegel}. For those submanifolds, the Gauss identity is given as
\begin{equation}
D_XY=\nabla_XY+JA(X,Y)-\scal{X,Y} \phi,\label{Gaussproj}
\end{equation}
where $D$ is the Levi Civita connection of $\Cx^{n+1}$.

Similarly, the space $\Cx H^n$ can be modeled as $H^{2n+1}/S^1$, where
\begin{equation*}
H^{2n+1}=\{(z_0,\cdots,z_n)\in \Cx^{n+1}_1|\abs{z_0}^2-\sum_{i=1}^{n} \abs{z_i}^2=1\}.
\end{equation*}
The equivalence relationship determined by $S^1$ is the same as the one used in the projective space. The ambient space $\Cx^{n+1}_1$ is essentially the space $\Cx^{n+1}$, but equipped with the scalar product
\begin{equation*}
\scal{\vec{z},\vec{w}}_1=\Re\left(\sum_{j=1}^n z_j\bar{w}_j-z_0\bar{w}_0\right).
\end{equation*}
The complex structure is still obtained through multiplication with the imaginary unit $i$ and induces a K\"ahler structure on $\Cx^{n+1}_1$. This metric induces a Lorentzian metric on $H^{2n+1}$ and a metric of constant holomorphic sectional curvature $-4$ on $\Cx H^n$. Similar to the projective case C-totally real submanifolds $\phi:M\rightarrow H^{2n+1}$ can be defined having $i\phi$ as a normal. Each minimal C-totally real submanifold corresponds to the horizontal lift of a  special Lagrangian submanifold of $\Cx H^n$. The Gauss identity is given as
\begin{equation}
D_XY=\nabla_XY+JA(X,Y)+\scal{X,Y} \phi,\label{Gausshyp}
\end{equation}
where $D$ is the Levi Civita connection of $\Cx^{n+1}_1$.

\subsection{Structure equations.}
We can return briefly to the equations \eqref{Gauss1} and \eqref{Codazzi1}. We can choose an orthogonal frame $\{X_1,X_2,X_3,X_4\}$ corresponding to \eqref{secondform} and define the components $\Gamma_{ij}^k$ and $A_{ij}^k$ as
\begin{align*}
\nabla_{X_i}X_j&=\sum_{k=1}^4 \Gamma_{ij}^k X_k,\\
A(X_i,X_j)&=\sum_{k=1}^4 A_{ij}^k X_k.
\end{align*}
Then the equations \eqref{Gauss1} and \eqref{Codazzi1} can be rewritten as
\begin{align}
\begin{split}\label{Gauss}
X_i\left(\Gamma_{jk}^l\right)-X_j\left(\Gamma_{ik}^l\right)=&\epsilon \left(\delta_{jk}\delta_{i}^l-\delta_{ik}\delta_{j}^l\right)+A_{jk}^rA_{ir}^l-A_{ik}^rA_{jr}^l\\
&+\Gamma_{ik}^r\Gamma_{jr}^l-\Gamma_{jk}^r\Gamma_{ir}^l+\Gamma_{rk}^l\left(\Gamma_{ij}^r-\Gamma_{ji}^r\right),
\end{split}\\
X_i\left(A_{jk}^l\right)-X_j\left(A_{ik}^l\right)=&\left(\Gamma_{ij}^r-\Gamma_{ji}^r\right)A_{rk}^l+\Gamma_{ik}^rA_{jr}^l-\Gamma_{jk}^rA_{ir}^l-\Gamma_{ir}^lA_{jk}^r+\Gamma_{jr}^lA_{ik}^r,\label{Codazzi}
\end{align}
where we have used the Einstein convention. We split the connection $\nabla$ into its components and write
\begin{align*}
\begin{matrix}
\nabla_{X_1}X_1=a_1X_2+a_2X_3+a_3X_4, &\vline & \nabla_{X_1}X_2=-a_1X_1+a_4X_3+a_5X_4,\\
\nabla_{X_2}X_1=b_1X_2+b_2X_3+b_3X_4, &\vline & \nabla_{X_2}X_2=-b_1X_1+b_4X_3+b_5X_4,\\
\nabla_{X_3}X_1=c_1X_2+c_2X_3+c_3X_4, &\vline & \nabla_{X_3}X_2=-c_1X_1+c_4X_3+c_5X_4,\\
\nabla_{X_4}X_1=d_1X_2+d_2X_3+d_3X_4, &\vline & \nabla_{X_4}X_2=-d_1X_1+d_4X_3+d_5X_4,\\
\end{matrix}\\
\begin{matrix}
\nabla_{X_1}X_3=-a_2X_1-a_4X_2+a_6X_4, &\vline & \nabla_{X_1}X_4=-a_3X_1-a_5X_2-a_6X_3,\\
\nabla_{X_2}X_3=-b_2X_1-b_4X_2+b_6X_4, &\vline & \nabla_{X_2}X_4=-b_3X_1-b_5X_2-b_6X_3,\\
\nabla_{X_3}X_3=-c_2X_1-c_4X_2+c_6X_4, &\vline & \nabla_{X_3}X_4=-c_3X_1-c_5X_2-c_6X_3,\\
\nabla_{X_4}X_3=-d_2X_1-d_4X_2+d_6X_4, &\vline & \nabla_{X_4}X_4=-d_3X_1-d_5X_2-d_6X_3.\\
\end{matrix}
\end{align*}
Equation \eqref{Codazzi} induces linear relations between the components, independent of the ambient space. The Gauss equations give further information about $\nabla$ but use differential equations and depend on the ambient space form.
\begin{lemma}
On a special Lagrangian submanifold $M$ having a local $SO(2)\rtimes S_3$-symmetry there exists a frame corresponding to \eqref{secondform} such that:
\begin{equation}
\begin{matrix}
&\nabla_{X_1}X_1=a_1X_2+a_2X_3+a_3X_4, &\vline & \nabla_{X_1}X_2=-a_1X_1-b_2X_3,\\
&\nabla_{X_2}X_1=b_1X_2+b_2X_3, &\vline & \nabla_{X_2}X_2=-b_1X_1+a_2X_3+a_3X_4,\\
&\nabla_{X_3}X_1=\frac{b_2}{3}X_2, &\vline & \nabla_{X_3}X_2=-\frac{b_2}{3}X_1,\\
&\nabla_{X_4}X_1=0, &\vline & \nabla_{X_4}X_2=0,\\
&\nabla_{X_1}X_3=-a_2X_1+b_2X_2+a_6X_4, &\vline & \nabla_{X_1}X_4=-a_3X_1-a_6X_3,\\
&\nabla_{X_2}X_3=-b_2X_1-a_2X_2+b_6X_4, &\vline & \nabla_{X_2}X_4=-a_3X_2-b_6X_3,\\
&\nabla_{X_3}X_3=c_6X_4, &\vline & \nabla_{X_3}X_4=-c_6X_3,\\
&\nabla_{X_4}X_3=d_6X_4, &\vline & \nabla_{X_4}X_4=-d_6X_3.\\
\end{matrix}\label{Conmat}
\end{equation}
Furthermore, the derivatives of $r$ are given by
\begin{align}
(X_1+i X_2)(r)&=3ir(a_1+ib_1),\label{cod1r}\\
X_3(r)&=ra_2,\label{cod3r}\\
X_4(r)&=r a_3.\label{cod4r}
\end{align}
\end{lemma}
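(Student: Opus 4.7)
The plan is to systematically apply the Codazzi identity \eqref{Codazzi} and then exploit the residual $SO(2)$ gauge freedom in the $\{X_3,X_4\}$-plane. Since $A$ annihilates any argument involving $X_3$ or $X_4$ by \eqref{secondform}, only those Codazzi equations whose output vector lies in $\spa\{X_1,X_2\}$ are nontrivial, which keeps the bookkeeping tractable.

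First I would read off the derivatives of $r$. Evaluating $(\nabla_{X_1}A)(X_2,X_2)=(\nabla_{X_2}A)(X_1,X_2)$ and projecting onto $X_1$ and $X_2$ produces $X_1(r)=-3rb_1$ and $X_2(r)=3ra_1$, which together form \eqref{cod1r}. Similarly $(\nabla_{X_3}A)(X_1,X_1)=(\nabla_{X_1}A)(X_3,X_1)$ and $(\nabla_{X_4}A)(X_1,X_1)=(\nabla_{X_1}A)(X_4,X_1)$, projected onto $X_1$, yield \eqref{cod3r} and \eqref{cod4r}.

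Next, the remaining components of those same Codazzi identities, together with $(\nabla_{X_3}A)(X_1,X_2)=(\nabla_{X_1}A)(X_3,X_2)$, $(\nabla_{X_4}A)(X_1,X_2)=(\nabla_{X_1}A)(X_4,X_2)$, and the two corresponding equations with $X_2,X_2$ replacing $X_1,X_1$, force the linear relations $a_4=-b_2$, $a_5=-b_3$, $a_2=b_4$, $a_3=b_5$, $c_1=b_2/3$, $d_1=-a_5/3$, and $c_2=c_3=c_4=c_5=d_2=d_3=d_4=d_5=0$. The coefficients appearing in $\nabla X_3$ and $\nabla X_4$ are then recovered from those of $\nabla X_1$ and $\nabla X_2$ via the metric compatibility relation $g(\nabla_VX_i,X_j)=-g(X_i,\nabla_VX_j)$, leaving only the antisymmetric entries $a_6,b_6,c_6,d_6$ as genuinely free connection data.

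The form \eqref{secondform} of $A$ is invariant under any rotation of the frame inside the $\{X_3,X_4\}$-plane, since $A$ vanishes on those directions. Under such a rotation by angle $\theta$, the pair $(a_4,a_5)$ transforms as a planar vector, so a unique choice of $\theta$ forces $a_5=0$; combined with the Codazzi identity $a_5=-b_3$ this simultaneously gives $b_3=0$ and hence $d_1=-a_5/3=0$. Every entry of \eqref{Conmat} is then either established or follows by orthonormality. The main obstacle is purely bookkeeping: verifying that each projection of every relevant Codazzi equation is internally consistent, and checking that the planar gauge choice does not disturb any of the linear relations extracted in the previous step.
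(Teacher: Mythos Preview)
Your proposal is correct and follows essentially the same approach as the paper: both arguments are a direct application of the Codazzi equation \eqref{Codazzi} followed by the $SO(2)$ gauge freedom in $\mathcal{N}_2$. The paper phrases the normalization as ``rotate so that $X_3$ lies along the $\mathcal{N}_2$-projection of $\nabla_{X_1}X_2$'', which is exactly your condition $a_5=0$; you simply spell out more of the intermediate Codazzi relations (e.g.\ $a_4=-b_2$, $a_5=-b_3$, $d_1=-a_5/3$) than the paper does.
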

\begin{proof}
This is just a straightforward application of equation \eqref{Codazzi}. For instance
\begin{align*}
\left(\nabla_{X_2}A\right)(X_1,X_1)&=X_2(r)X_1+3rb_1X_2+rb_2X_3+rb_3X_4,\\
\left(\nabla_{X_1}A\right)(X_2,X_1)&=3ra_1X_1-X_1(r) X_2-ra_4X_3-ra_5X_4.
\end{align*}
Then the corresponding coordinates of both derivatives are the same. Finally we can set $b_3=0$, by rotating the distribution $\mathcal{N}_2$ such that $X_3$ lies in the direction of $\nabla_{X_1} X_2$, projected on $\mathcal{N}_2$.
\end{proof}
It is interesting to note that $\mathcal{N}_2$ is an integrable distribution. The distribution $\mathcal{N}_1$ however is integrable if and only if $b_2=0$. Applying \eqref{Gauss}, we obtain the following result.
\begin{lemma}
The equations \eqref{Gauss} on our frame of choice induce a system of differential equations given by:
\begin{align}
(X_1+iX_2)(a_2-ib_2)&=a_3(a_6+ib_6), \label{gaua122}\\
X_3(a_2+i b_2)&=\epsilon+a_3^2+(a_2+ib_2)^2, \label{gaua32}\\
X_4(a_2+i b_2)&=a_3 (a_2+i b_2), \label{gaua42}\\
(X_1+iX_2)(a_3)&=-(a_2-i b_2)(a_6+i b_6), \label{gaua123}\\
X_3(a_3)&=0, \label{gaua33}\\
X_4(a_3)&=a_3^2+\epsilon, \label{gaua43}\\
X_1(b_6)-X_2(a_6)&=-(a_1a_6+b_1b_6), \label{gaua126-}\\
X_3(a_6+i b_6)&=\frac{5}{3} i b_2 (a_6+ib_6), \label{gaua36}\\
X_4(a_6+i b_6)&=2 a_3 (a_6+i b_6), \label{gaua46}\\
X_1(b_1)-X_2(a_1)&=2r^2-(\epsilon+a_3^2)-\frac{5}{3}b_2^2-a_2^2-a_1^2-b_1^2, \label{gaua121-}\\
X_2(b_1)+X_1(a_1)&=-\frac{2}{3}a_2b_2, \label{gaua121+}\\
3X_3(a_1)-X_1(b_2)&=3a_1a_2-2b_1b_2, \label{gaua131}\\
3X_3(b_1)-X_2(b_2)&=2b_2a_1+3b_1a_2, \label{gaua231}\\
X_4(a_1+ib_1)&=a_3(a_1+ib_1)+\frac{b_2}{3}(a_6+i b_6). \label{gaua41}
\end{align}
\end{lemma}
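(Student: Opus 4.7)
The plan is to substitute the connection components from \eqref{Conmat} and the nonzero components of $A$ from \eqref{secondform} directly into the Gauss equation \eqref{Gauss}, and then to read off and combine the resulting scalar identities into the fourteen stated equations. This parallels exactly the strategy used in the preceding lemma for the Codazzi identity \eqref{Codazzi}.

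First I would enumerate the admissible index tuples $(i,j,k,l)$. Antisymmetry in $(i,j)$ reduces these to six blocks indexed by $(i,j) \in \{(1,2),(1,3),(1,4),(2,3),(2,4),(3,4)\}$, each giving $16$ scalar equations parameterized by $(k,l)$. Because $A$ is supported on $\mathcal{N}_1 \times \mathcal{N}_1 \to \mathcal{N}_1$ and vanishes as soon as one slot lies in $\mathcal{N}_2$, most of the quadratic terms $A_{jk}^r A_{ir}^l - A_{ik}^r A_{jr}^l$ vanish; the surviving ones contribute only $r^2$-terms (yielding the $2r^2$ in \eqref{gaua121-}). The $\epsilon$-contribution appears only when $k,l \in \{i,j\}$, and explains the $\epsilon$-terms in \eqref{gaua32}, \eqref{gaua43} and \eqref{gaua121-}.

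Next I would pair real identities that are related by the $SO(2)$-rotation in $\mathcal{N}_1$ and combine them into complex equations, using the holomorphic combinations $X_1+iX_2$ together with $a_1+ib_1$, $a_2+ib_2$ and $a_6+ib_6$. For instance, the two Gauss identities coming from the $(1,3)$- and $(2,3)$-blocks that involve $X_3(a_2)$ and $X_3(b_2)$ fuse into the single complex equation \eqref{gaua32}; the same mechanism produces \eqref{gaua36}, \eqref{gaua46}, \eqref{gaua41} and \eqref{gaua123}. The $(1,2)$-block remains real and yields the self-contained identities \eqref{gaua121-}--\eqref{gaua231}.

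The main obstacle is bookkeeping rather than conceptual: the connection matrix \eqref{Conmat} is far from antisymmetric in its lower indices, so the torsion-type contributions $\Gamma_{ij}^r - \Gamma_{ji}^r$ inside \eqref{Gauss} must be expanded with care. The precise numerical coefficients $\tfrac{2}{3}$ and $\tfrac{5}{3}$ appearing in \eqref{gaua121+}, \eqref{gaua121-}, \eqref{gaua131}, \eqref{gaua231} and \eqref{gaua36} emerge only after cancellations involving the $\tfrac{b_2}{3}$-entries of $\nabla_{X_3}X_1$ and $\nabla_{X_3}X_2$ against the asymmetry in the first block of \eqref{Conmat}. One must finally verify that the remaining formally distinct index choices either reproduce relations already on the list or reduce to trivial identities, so that \eqref{gaua122}--\eqref{gaua41} is indeed the complete set of independent Gauss identities compatible with the symmetry reduction \eqref{secondform}.
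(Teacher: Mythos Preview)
Your proposal is correct and follows essentially the same approach as the paper: both substitute the connection coefficients from \eqref{Conmat} and the components of $A$ from \eqref{secondform} directly into the Gauss equation \eqref{Gauss}, then pair the resulting real identities into complex equations via the combination $X_1+iX_2$. The paper's proof is in fact terser than yours, merely working out one sample pair of indices (leading to \eqref{gaua122}) and declaring the rest analogous, whereas you outline the full bookkeeping scheme.
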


\begin{proof}
This is also a straightforward application of \eqref{Gauss}. For example:
\begin{align*}
X_1\left(\Gamma_{23}^1\right)-X_2\left(\Gamma_{13}^1\right)&= \Gamma_{13}^r\Gamma_{2r}^1-\Gamma_{23}^r\Gamma_{1r}^1+\Gamma_{r3}^1\Gamma_{12}^r-\Gamma_{r3}^1\Gamma_{21}^r\\
&=a_3 b_6,\\
X_1\left(\Gamma_{23}^2\right)-X_2\left(\Gamma_{13}^2\right)&=\Gamma_{13}^r\Gamma_{2r}^2-\Gamma_{23}^r\Gamma_{1r}^2+\Gamma_{r3}^2\Gamma_{12}^r-\Gamma_{r3}^2\Gamma_{21}^r\\
&=-a_3 a_6.
\end{align*}
Combining both equations using the usual complex notations leads to \eqref{gaua122}. The other equations are obtained in a similar way.
\end{proof}

\subsection{Warped Products.}
In the analysis that follows, we will often encounter warped products of manifolds. When we consider a warped product of Riemannian manifolds $(M_1,g_1)$ and $(M_2,g_2)$ with warping function $e^f$, where $f:M_1\rightarrow \Rl$, we get a Riemannian manifold $(M_1\times M_2,g_f)$ where $M_1\times M_2$ as a differentiable manifolds is the product of $M_1$ and $M_2$ and the metric $g_f$ is given as
\begin{equation*}
g_f(X,Y)=g_1(X_1,Y_1)+e^{2f} g_2(X_2,Y_2),
\end{equation*}
where a vector field $X$ is uniquely decomposed into a part $X_1$ tangent to $M_1$ and $X_2$ tangent to $M_2$. We denote this warped product as $M_1\times_{e^f} M_2$. The following result can be obtained, see \cite{hiepko}.
\begin{theorem}
Consider a Riemannian manifold $(M,g)$ with Levi-Civita connection $\nabla$ and suppose that on a neighborhood of $p\in M$ there are orthogonal distributions $\mathcal{N}_1$ and $\mathcal{N}_2$ such that
\begin{align*}
&\forall X,Y\in \mathcal{N}_1 \text{(i.e. $X$ and $Y$ are sections of $\mathcal{N}_1$)}  : \nabla_XY\in \mathcal{N}_1,\\
&\forall X,Y\in \mathcal{N}_2  : \nabla_XY=\tilde{\nabla}_XY+g(X,Y) H,
\end{align*}
where $\tilde{\nabla}$ is the projection of $\nabla$ on $\mathcal{N}_2$ and $H\in\mathcal{N}_1$. Then there exists a function $f:M\rightarrow \Rl$ such that on a neighborhood of $p$, $M$ can be written as $M_1\times_{e^f} M_2$, where $M_i$ is an integral manifold of $\mathcal{N}_i$.\\
If furthermore $H=\lambda H_0$, where $\norm{H_0}=1$, and $X(\lambda)=0$ for every $X\in\mathcal{N}_2$, then
$f:M_1\rightarrow \Rl$ and $H=-\grad{e^f}$.
\end{theorem}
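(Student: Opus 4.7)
The plan is to prove this along the lines of Hiepko's classical argument: derive a warped product structure from the autoparallel distribution $\mathcal{N}_1$ and the umbilical distribution $\mathcal{N}_2$ by producing adapted coordinates and reading the metric off them.

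First, I would verify that both distributions are integrable. The autoparallel condition on $\mathcal{N}_1$ gives $[X,Y]=\nabla_XY-\nabla_YX\in\mathcal{N}_1$ at once, while for $\mathcal{N}_2$ the symmetric term $g(X,Y)H$ in $\nabla_XY=\tilde\nabla_XY+g(X,Y)H$ cancels in $[X,Y]$, leaving $[X,Y]=\tilde\nabla_XY-\tilde\nabla_YX\in\mathcal{N}_2$. By the Frobenius theorem I pick local coordinates $(x^1,\dots,x^k,y^{k+1},\dots,y^n)$ at $p$ with $\partial_{x^i}$ spanning $\mathcal{N}_1$ and $\partial_{y^\alpha}$ spanning $\mathcal{N}_2$, giving integral manifolds $M_1$ and $M_2$ through $p$.

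Next, I would convert the two structural hypotheses into equations on the metric components. Orthogonality of the distributions writes $g=g_{ij}(x,y)\,dx^idx^j+g_{\alpha\beta}(x,y)\,dy^\alpha dy^\beta$. The Koszul formula applied to the autoparallel condition $g(\nabla_{\partial_i}\partial_j,\partial_\alpha)=0$ forces $\partial_\alpha g_{ij}=0$, so $g_{ij}$ depends only on $x$ and defines the metric of $M_1$. The umbilical hypothesis, pushed through Koszul, gives $\partial_i g_{\alpha\beta}=-2H_ig_{\alpha\beta}$ where $H_i$ are the components of $H^\flat$. The decisive observation is that $-2H_i$ on the right is independent of the indices $(\alpha,\beta)$, so for any fixed $(\alpha_0,\beta_0)$ the ratios $g_{\alpha\beta}/g_{\alpha_0\beta_0}$ are $x$-independent. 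Setting $\phi(x,y):=g_{\alpha_0\beta_0}(x,y)$, $h_{\alpha\beta}(y):=g_{\alpha\beta}/\phi$ and $e^{2f}:=\phi$ brings the metric to the form $g_1(x)+e^{2f(x,y)}h(y)$, which is exactly the warped product $M_1\times_{e^f}M_2$. A by-product is the identity $H_i=-\partial_if$, so the gradient nature of $H^\flat$ along $\mathcal{N}_1$ is built in and requires no extra integrability hypothesis.

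For the strengthened conclusion, the hypothesis $H=\lambda H_0$ with $\|H_0\|=1$ and $X(\lambda)=0$ for all $X\in\mathcal{N}_2$ says in effect that the vector field $H$ itself is $y$-independent. Combined with $H_i=-\partial_if$, this forces each $\partial_if$ to be $y$-independent, so $f(x,y)=F(x)+G(y)$. Exploiting the gauge freedom $\phi\mapsto c(y)\phi$, $h_{\alpha\beta}\mapsto h_{\alpha\beta}/c(y)$ in the factorization, I absorb the $G$-term into $h_{\alpha\beta}$ and obtain $f:M_1\to\Rl$; the identity $H_i=-\partial_if$ then becomes the gradient expression for $H$ stated in the theorem. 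The main obstacle is the middle step: one must recognize that the umbilical equation simultaneously forces the factorization $g_{\alpha\beta}=\phi(x,y)h_{\alpha\beta}(y)$ and gives $H$ its gradient structure on $M_1$, which is what separates a genuine warped product from an arbitrary bundle-like metric with totally umbilical fibers.
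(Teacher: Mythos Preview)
The paper does not supply its own proof of this statement; it simply quotes it with a reference to N\"olker. Your sketch is the classical Hiepko argument, and the first half --- integrability of both distributions, the Koszul identities $\partial_\alpha g_{ij}=0$ and $\partial_i g_{\alpha\beta}=-2H_ig_{\alpha\beta}$, and the resulting factorisation $g_{\alpha\beta}=e^{2f(x,y)}h_{\alpha\beta}(y)$ yielding the twisted product --- is correct and is exactly how the standard proof goes.

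There is a genuine gap in your reduction to a warped product. You claim that the hypothesis ``$H=\lambda H_0$, $\|H_0\|=1$, $X(\lambda)=0$ for $X\in\mathcal{N}_2$'' means in effect that $H$ itself is $y$-independent. Taken literally it only says $\lambda=\|H\|$ is constant along the $\mathcal{N}_2$-leaves, which is strictly weaker: on $\Rl^2\times\Rl$ with metric $dx_1^2+dx_2^2+e^{2f}dy^2$, $f=x_1\cos y+x_2\sin y$, the $(x_1,x_2)$-planes are totally geodesic, the $y$-lines are umbilical with $H=-\cos y\,\partial_{x_1}-\sin y\,\partial_{x_2}$ and $\|H\|\equiv 1$, yet $f$ is not separable and this twisted product is not warped. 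What actually forces $f(x,y)=F(x)+G(y)$ is the \emph{spherical} condition $(\nabla_XH)^{\mathcal{N}_1}=0$ for $X\in\mathcal{N}_2$: since $g(H,\nabla_{\partial_\alpha}\partial_i)=0$ one has $\partial_\alpha H_i=g\big((\nabla_{\partial_\alpha}H)^{\mathcal{N}_1},\partial_i\big)$, and its vanishing is exactly $\partial_\alpha\partial_i f=0$. When $\dim\mathcal{N}_1=1$ the two conditions coincide (write $H=\lambda X$ with $\|X\|=1$; then $(\nabla_YH)^{\mathcal{N}_1}=Y(\lambda)X$), which is why the paper's applications go through; but as a proof of the theorem as stated your second step is incomplete without the stronger hypothesis.
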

The first part of the theorem constructs a twisted product, the second part reduces this to a warped product.
This will be useful in choosing coordinates, since the product structures allows for coordinates to be chosen on each factor separately. In particular, if $\dim\left(\mathcal{N}_1\right)=1$, then any non vanishing vector field in $\mathcal{N}_1$ can be fixed as a useful coordinate vector field on $M$.\\

\section{Submanifolds in $\Cx^4$.}
\subsection{The case where $b_2=0$.}
The assumption that $X_3$ lies along $\nabla_{X_1} X_2$ becomes redundant since the latter has no $\mathcal{N}_2$ component. Instead, we can choose $X_3$ in the direction of $\nabla_{X_1} X_1$, projected on $\mathcal{N}_2$. Hence without loss of generality we can assume that $a_3=0$. The equations \eqref{Gauss} show that in this case either $a_2=0$ or $a_6=b_6=c_6=d_6=0$. First we will assume that $a_2\neq0$.
\begin{theorem}
Consider $M$ a special Lagrangian submanifold in $\Cx^4$ having $SO(2)\rtimes S_3$-symmetry and an orthogonal frame corresponding to \eqref{secondform}. Suppose that $\mathcal{N}_1$ is an integrable distribution and $\nabla_{X_1} X_1$  is nowhere contained within this distribution. Then $M$ is locally congruent to
\begin{equation}
F(t,s,u,v)=(t,s \phi(u,v))
\end{equation}
where $\phi$ is the horizontal lift of a special Lagrangian submanifold of $\Cx P^2$ to the unit sphere in $\Cx^3$.
\end{theorem}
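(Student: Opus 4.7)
The plan is to exploit the drastic simplification that occurs under the given hypotheses. Integrability of $\mathcal{N}_1$ forces $b_2=0$; the preparatory choice of $X_3$ noted at the start of Section 3.1 gives $a_3=0$; and the remark immediately following, applied with our assumption $a_2\neq 0$ (i.e.\ $\nabla_{X_1}X_1\not\subset \mathcal{N}_1$), yields $a_6=b_6=c_6=d_6=0$. In the resulting simplification of \eqref{Conmat}, both $\spa\{X_4\}$ and $\spa\{X_1,X_2,X_3\}$ are totally geodesic distributions, and moreover $X_4$ is $\nabla$-parallel. Hiepko's theorem therefore produces a local Riemannian product decomposition $M\cong\Rl\times N^3$.

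Next I lift this splitting to the ambient space. Because $\nabla_XX_4=0$ for every tangent $X$, and $A(X,X_4)=0$ by \eqref{secondform}, one has $D_XX_4=0$, so $X_4$ is a constant vector in $\Cx^4$. Together with $JX_4$ (also constant, and normal to $M$ by the Lagrangian condition) it spans a fixed complex line $L\cong\Cx$. After a unitary motion we identify $\Cx^4\cong L\oplus L^\perp\cong\Cx\times\Cx^3$ with $X_4$ pointing along the first factor; since $X_1,X_2,X_3\perp L$, the immersion takes the product form $F(t,s,u,v)=(t,\psi(s,u,v))$ for some Lagrangian immersion $\psi:N^3\to\Cx^3$.

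The remaining task is to realise $\psi$ as a cone $s\phi$. On $N^3$ the Gauss formula gives $D_{X_3}X_3=0$ in $\Cx^3$ and $D_{X_i}X_3=-a_2X_i$ for $i=1,2$. Equations \eqref{gaua32} and \eqref{gaua122} with our reductions yield $X_3(a_2)=a_2^2$ and $X_1(a_2)=X_2(a_2)=0$, so $s:=-1/a_2$ is a valid coordinate with $\partial_s=X_3$ which is constant along the $N^2$-leaves. The central calculation is to verify that $W:=Z-sX_3$, with $Z$ the ambient position vector in $\Cx^3$, satisfies $D_XW=0$ for every $X$ tangent to $N^3$: the identity $1+sa_2=0$ is precisely what makes $D_{X_1}W$ and $D_{X_2}W$ vanish, while $D_{X_3}W$ vanishes from $X_3(s)=1$ and $D_{X_3}X_3=0$. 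Hence $W$ is constant in $\Cx^3$; translating the origin absorbs it, yielding $Z=sX_3$. Since $D_{X_3}X_3=0$, $X_3$ is independent of $s$, and setting $\phi(u,v):=X_3$ gives $\psi(s,u,v)=s\phi(u,v)$ with $|\phi|=1$.

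Finally, $\phi:N^2\to S^5\subset\Cx^3$ is the horizontal lift of a special Lagrangian in $\Cx P^2$: its tangent plane is $\spa\{D_{X_1}\phi,D_{X_2}\phi\}=\spa\{X_1,X_2\}$, to which $J\phi=JX_3$ is orthogonal by the Lagrangian property of $M$, so $\phi$ is C-totally real (horizontal); and the induced second fundamental form on $\phi$ is the restriction of $A$ to $\spa\{X_1,X_2\}$, which by \eqref{secondform} is traceless, so $\phi$ is minimal. The correspondence recalled in Section 2 (see \cite{ziegel}) then identifies $\phi$ as the horizontal lift of a special Lagrangian surface in $\Cx P^2$, and we obtain $F(t,s,u,v)=(t,s\phi(u,v))$. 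I expect the main technical hurdle to be the position-vector computation for $W$---specifically guessing the reparametrisation $s=-1/a_2$; once this is in place, everything else is bookkeeping with the structure equations already recorded.
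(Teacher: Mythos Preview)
Your proposal is correct and follows essentially the same route as the paper: reduce the connection via $b_2=a_3=0$, $a_2\neq0\Rightarrow a_6=b_6=c_6=d_6=0$; split off the parallel $X_4$-direction as a flat $\Cx$-factor; solve $X_3(a_2)=a_2^2$ to get $a_2=-1/s$; and then identify the remaining $3$-fold as a cone over a minimal C-totally real surface in $S^5$. Your position-vector computation $W=Z-sX_3$ with $D_XW=0$ is exactly the paper's argument that $F=As+B$ with $B_*=0$, just phrased more intrinsically; the only cosmetic imprecision is that the second fundamental form of $\phi$ is the restriction of $A$ rescaled by $1/s^2$ (cf.\ the paper's \eqref{refren}), but tracelessness and hence minimality survive the rescaling, so the conclusion stands.
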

\begin{proof}
Taking into account every component that vanishes in \eqref{Conmat}, we find
\begin{align*}
\begin{matrix}
&\nabla_{X_1}X_1=a_1X_2+a_2X_3&\vline & \nabla_{X_1}X_2=-a_1X_1,\\
&\nabla_{X_2}X_1=b_1X_2 &\vline & \nabla_{X_2}X_2=-b_1X_1+a_2X_3,\\
&\nabla_{X_3}X_1=0 &\vline & \nabla_{X_3}X_2=0,\\
&\nabla_{X_4}X_1=0 &\vline & \nabla_{X_4}X_2=0,\\
&\nabla_{X_1}X_3=-a_2X_1 &\vline & \nabla_{X_1}X_4=0,\\
&\nabla_{X_2}X_3=-a_2X_2&\vline & \nabla_{X_2}X_4=0,\\
&\nabla_{X_3}X_3=0 &\vline & \nabla_{X_3}X_4=0,\\
&\nabla_{X_4}X_3=0 &\vline & \nabla_{X_4}X_4=0.\\
\end{matrix}
\end{align*}
We find that the distributions $\spa\{X_4\}$ and $\spa\{X_1,X_2,X_3\}$ satisfy the conditions for a warped product $\Rl\times_{e^f} N^3$. But $X_4(f)=0$, hence $f$ is a constant. $M$ is a standard Riemannian product $\Rl\times N^3$ and its immersion can be written, up to an isometry as
\begin{equation*}
F(t,x)=\left(t,\psi(x)\right), \,
\psi:N^3\rightarrow \Cx^4.
\end{equation*}
The immersion $\psi$ is contained in the subspace orthogonal to both $X_4$ and $J X_4$, since they both are constant unit normals along $N^3$. Now it is also obvious that $\spa\{X_3\}$ and $\mathcal{N}_1$ satisfy the conditions for a warped product. So $N^3$ can be decomposed as $\Rl\times_{e^g} N^2$ and $X_3(g)=-a_2$. Then $X_3$ can be associated with a coordinate $s$ on the manifold and it follows that
\begin{equation*}
D_{X_3}X_3=\pardd{F}{s}=0 \,
          \Rightarrow F=A s+B.
\end{equation*}
Both $A$ and $B$ are independent of $(s,t)$. Calculating \eqref{Gauss}, one has
\begin{equation*}
X_3(a_2)=\pard{a_2}{s}=a_2^2.
\end{equation*}
The solution of this equation, after a translation of the $s$-coordinate is given as $a_2=-\frac{1}{s}$. The derivatives of $X_3$ to $X_1$ and $X_2$ are
\begin{equation*}
\begin{split}
D_{X_i} X_3&= \pard{F_* X_i}{s}= A_* X_i\\
           &=\frac{1}{s} X_i=A_*X_i+\frac{B_* X_i}{s}\\
           &\Rightarrow  B_*=0.
\end{split}
\end{equation*}
So $B$ is a constant vector along the submanifold and vanishes when applying a translation. It is easy to see that $X_3=A$ and is orthogonal to $X_i=s A_*(X_i)$, for $i\in \{1,2\}$. Hence everywhere along $A$, the position vector is orthogonal to the tangent space. Thus $A$ has constant length. Calculating the other covariant derivatives yields for example for $i,j \in\{1,2\}$ that
\begin{equation}
\begin{split}
A_*X_i&=\frac{F_*X_i}{s},\\
D_{X_i}\left(A_*X_j\right)&=\frac{D_{X_i} \left(F_*X_j\right)}{s}=A_*\left(\tilde{\nabla}_{X_i} X_j\right)+J A_*\left( K(X_i,X_j)\right)-\frac{1}{s^2}\delta_{ij} \phi. \label{refren}
\end{split}
\end{equation}
Here $\tilde{\nabla}$ is the connection restricted to $N^2$. Combining this with the other equations in \eqref{Gauss}, it follows that $A$ is a C-totally real immersion in $S^5\subset \Cx^3$. Furthermore, the components $a_1$ and $b_1$ have no other restrictions on them except satisfying the Gauss equations for a minimal C-totally real submanifold of $S^5$.  This proves the theorem.
\end{proof}
The case $a_2=0$ was the only case that was studied in \cite{mariant}. We can quote the following result from \cite{mariant}.
\begin{theorem}
Consider $M$ a special Lagrangian submanifold in $\Cx^4$ having a local $SO(2)\rtimes S_3$-symmetry group and an orthogonal frame corresponding to \eqref{secondform}. Suppose that $\mathcal{N}_1$ is an integrable distribution and $\nabla_{X_1} X_1$  is contained within this distribution. Then $M$ is locally congruent to
\begin{equation}
F(t,s,u,v)=\left(t,s,\phi(u,v)\right)
\end{equation}
where $\phi:\Cx\rightarrow \Cx^2$ is a special Lagrangian surface.
\end{theorem}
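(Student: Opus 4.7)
My plan is as follows. The hypotheses of the theorem give $b_2 = 0$ (integrability of $\mathcal{N}_1$) and $a_3 = 0$ (from the frame choice made in the case $b_2 = 0$), and the condition $\nabla_{X_1}X_1 \in \mathcal{N}_1$ forces $a_2 = 0$ as well. Substituting $a_2 = a_3 = b_2 = 0$ into the connection formulas \eqref{Conmat}, both distributions are seen to be totally geodesic: every $\nabla_{X_i}X_j$ with $i,j \in \{1,2\}$ stays in $\mathcal{N}_1$, and every $\nabla_{X_k}X_l$ with $k,l \in \{3,4\}$ stays in $\mathcal{N}_2$. Hiepko's theorem with $H = 0$ then yields a local Riemannian product decomposition $M = N_1 \times N_2$.

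To locate $N_2$ inside $\Cx^4$, I would apply the Gauss formula $D_XY = \nabla_XY + JA(X,Y)$. Since $A$ is identically zero on $\mathcal{N}_2 \times \mathcal{N}_2$, each $D_{X_k}X_l$ for $k,l \in \{3,4\}$ equals $\nabla_{X_k}X_l$ and is tangent to $N_2$; hence the second fundamental form of $N_2$ in $\Cx^4$ vanishes and $N_2$ is an affine $2$-plane. Applying $J$ shows $D_{X_k}(JX_l) \in \spa\{JX_3,JX_4\}$, so $J(TN_2)$ is a parallel normal subbundle, and the $4$-dimensional $J$-invariant subspace $W_2 := TN_2 \oplus J(TN_2)$ is constant along $N_2$. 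By the Lagrangian condition on $M$, $TN_2 \perp J(TN_2)$, so $N_2$ is a Lagrangian $\Rl^2$ sitting in the $\Cx^2$ modeled on $W_2$.

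For $N_1$, the Gauss formula together with the vanishing of $A$ on $\mathcal{N}_1 \times \mathcal{N}_2$ gives $D_{X_i}X_k \in \mathcal{N}_2$ for $i \in \{1,2\}$, $k \in \{3,4\}$, and consequently $D_{X_i}(JX_k) \in J\mathcal{N}_2$. Thus $W_2 = \mathcal{N}_2 \oplus J\mathcal{N}_2$ is parallel along $N_1$ and normal to it, so $N_1$ lies in a translate of the orthogonal complement $W_1 := W_2^\perp = \mathcal{N}_1 \oplus J\mathcal{N}_1$, another Lagrangian $\Cx^2$. Within this $\Cx^2$ the surface $N_1$ is Lagrangian (its normal bundle is $\spa\{JX_1,JX_2\}$) and its ambient second fundamental form is $JA|_{\mathcal{N}_1}$; the computation $A(X_1,X_1) + A(X_2,X_2) = rX_1 - rX_1 = 0$ shows $N_1$ is minimal, hence special Lagrangian.

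Combining everything, $M$ locally embeds in $\Cx^4 = W_2 \oplus W_1$ as the product $N_2 \times N_1$, where $N_2$ is the Lagrangian plane $\Rl^2 \subset \Cx^2$ parameterized by real coordinates $(t,s)$ and $N_1$ is a special Lagrangian surface $\phi(u,v) \subset \Cx^2$, yielding $F(t,s,u,v) = (t,s,\phi(u,v))$. The only nontrivial point is the consistency of the two parallel-subspace arguments---that the Lagrangian $\Cx^2$ containing $N_2$ and the one containing $N_1$ are genuine orthogonal complements in $\Cx^4$---which follows directly from the pairwise orthogonality of $TN_1,\,J(TN_1),\,TN_2,\,J(TN_2)$ guaranteed by the Lagrangian condition on $M$.
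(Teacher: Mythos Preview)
Your argument is correct. The paper itself does not prove this theorem; it quotes it from Ionel \cite{mariant}, noting that ``the case $a_2=0$ was the only case that was studied'' there. Your proof is therefore more than the paper provides. The route you take---observing that $a_2=a_3=b_2=0$ makes both $\mathcal{N}_1$ and $\mathcal{N}_2$ parallel, then showing the complex $2$-planes $W_1=\mathcal{N}_1\oplus J\mathcal{N}_1$ and $W_2=\mathcal{N}_2\oplus J\mathcal{N}_2$ are $D$-parallel along $M$ so that the immersion splits---is the natural one and mirrors the warped-product reasoning the paper uses in the adjacent case $a_2\neq 0$. One point you might make fully explicit: once $W_1$ and $W_2$ are constant subspaces of $\Cx^4$, the splitting $F=F_1+F_2$ with $F_1=F_1(u,v)\in W_1$ and $F_2=F_2(t,s)\in W_2$ follows because $F_*X_k\in\mathcal{N}_2\subset W_2$ kills the $\mathcal{N}_2$-derivatives of the $W_1$-component, and symmetrically $F_*X_i\in W_1$ kills the $\mathcal{N}_1$-derivatives of the $W_2$-component.
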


\begin{remark}
As proved in \cite{mariant}, a special Lagrangian surface in $\Cx^2$, with complex coordinates $x_1+iy_1$ and $x_2+iy_2$ is a holomorphic curve in $\Cx^2$ with complex coordinates $x_1-ix_2$ and $y_1+iy_2$, and conversely.
\end{remark}

\subsection{The case where $b_2\neq 0$}
Now the distribution $\mathcal{N}_1$ is no longer integrable. The simplest case one can hope for is that there is a $3$-dimensional integrable distribution containing $\mathcal{N}_1$. Such a distribution should contain at least $X_3$ since
$$\lie{X_1,X_2}\mod \mathcal{N}_1 \parallel X_3.$$
Using the fact that $b_2 \neq 0$, the equations \eqref{Gauss} reduce \eqref{Conmat} to
\begin{equation}
\begin{matrix}
&\nabla_{X_1}X_1=a_1X_2+a_2X_3+a_3X_4 &\vline & \nabla_{X_1}X_2=-a_1X_1-b_2X_3,\\
&\nabla_{X_2}X_1=b_1X_2+b_2X_3 &\vline & \nabla_{X_2}X_2=-b_1X_1+a_2X_3+a_3X_4,\\
&\nabla_{X_3}X_1=\frac{b_2}{3}X_2 &\vline & \nabla_{X_3}X_2=-\frac{b_2}{3}X_1,\\
&\nabla_{X_4}X_1=0 &\vline & \nabla_{X_4}X_2=0,\\
&\nabla_{X_1}X_3=-a_2X_1+b_2X_2+a_6X_4 &\vline & \nabla_{X_1}X_4=-a_3X_1-a_6X_3,\\
&\nabla_{X_2}X_3=-b_2X_1-a_2X_2+b_6X_4 &\vline & \nabla_{X_2}X_4=-a_3X_2-b_6X_3,\\
&\nabla_{X_3}X_3=a_3X_4 &\vline & \nabla_{X_3}X_4=-a_3X_3,\\
&\nabla_{X_4}X_3=0 &\vline & \nabla_{X_4}X_4=0.\\
\end{matrix}\label{Conmat2}
\end{equation}
It is apparent that the condition that $\mathcal{N}_+$ is integrable is given by $a_6+ib_6=0$. We consider this case first.
\begin{theorem}
Suppose $M$ is a special Lagrangian submanifold in $\Cx^4$ with  $SO(2)\rtimes S_3$-symmetry, such that $\mathcal{N}_1$ is not an integrable distribution, but $\mathcal{N}_+$ is. Then the submanifold, up to isometry, can be given locally by either
\begin{equation}
F(t,s,u,v)=\left(t,\phi(s,u,v)\right),
\end{equation}
where $\phi$ is a special Lagrangian submanifold with $S_3$-symmetry in $\Cx^3$, or
\begin{equation}
F(t,s,u,v)=t \phi(s,u,v)
\end{equation}
where $\phi$ is the horizontal lift of a special Lagrangian submanifold with local $S_3$-symmetry in $\Cx P^3$ to the unit sphere in $\Cx^4$.
\end{theorem}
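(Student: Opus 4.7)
The plan is to apply the Hiepko decomposition with fibre distribution $\mathcal{N}_+$ and $1$-dimensional base $\spa\{X_4\}$, and then distinguish the two cases $a_3\equiv 0$ and $a_3\not\equiv 0$, which will correspond to the two alternatives in the statement. First I would substitute $a_6=b_6=0$ into \eqref{Conmat2} and verify via the Lie brackets $[X_1,X_3]$ and $[X_2,X_3]$ that $\mathcal{N}_+$ is integrable, while $\spa\{X_4\}$ is trivially integrable. Reading off \eqref{Conmat2}, $\nabla_{X_4}X_4=0$ so the $X_4$-leaves are geodesic in $M$, and the $X_4$-component of $\nabla_{X_i}X_j$ equals $a_3\,g(X_i,X_j)$ for $i,j\in\{1,2,3\}$, showing that the leaves of $\mathcal{N}_+$ are totally umbilical with common mean curvature vector $H=a_3X_4$.

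Using \eqref{gaua123} (whose right-hand side vanishes since $a_6=b_6=0$) together with \eqref{gaua33}, I would then deduce $X_1(a_3)=X_2(a_3)=X_3(a_3)=0$, so $a_3$ depends only on the leaf parameter $t$ of $\spa\{X_4\}$. The conditions of the warped-product theorem of Hiepko are thereby satisfied, yielding a local splitting $M=I\times_{e^f}N^3$ with $f'(t)=-a_3$, where $N^3$ is an integral manifold of $\mathcal{N}_+$. This then splits into the two announced subcases $a_3\equiv 0$ (a Riemannian product, cylinder) and $a_3\not\equiv 0$ (non-trivial warping, cone).

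If $a_3\equiv 0$ then $\nabla_ZX_4=0$ for every tangent $Z$, and $A(Z,X_4)=0$ by \eqref{secondform}, so $D_ZX_4=0$ in $\Cx^4$ and $X_4=v$ is a constant unit vector. Integrating $\partial_tF=v$ gives $F(t,x)=tv+\psi(x)$, and since the tangent vectors $\psi_*X_i$ must be perpendicular to both $v$ and $Jv$ by the Lagrangian condition, the image of $\psi$ lies in the $\Cx^3$ orthogonal to the complex line through $v$. After an ambient isometry sending $v$ to the first coordinate direction and absorbing a translation, this becomes $F(t,x)=(t,\phi(x))$. Since $A(X_i,X_j)$ has no $X_4$-component for $i,j\in\{1,2,3\}$, the restriction of $A$ to the frame $\{X_1,X_2,X_3\}$ gives $\phi$ the $S_3$-symmetric cubic form, and $\phi$ is special Lagrangian because $M$ is.

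If $a_3\not\equiv 0$ then \eqref{gaua43} (with $\epsilon=0$) reads $X_4(a_3)=a_3^2$ and integrates to $a_3=-1/t$ after a translation of $t$. From $D_{X_4}X_4=0$ the immersion is affine in $t$: $F(t,x)=tV(x)+W(x)$, and substituting $X_4=V(x)$ and $F_*X_i=tV_*X_i+W_*X_i$ into $D_{X_i}X_4=-a_3X_i$ yields $(1+a_3 t)V_*X_i=-a_3 W_*X_i$; the specific value $a_3=-1/t$ forces $W_*\equiv 0$, so $W$ is a constant that can be removed by a further translation of $\Cx^4$. The immersion is then the cone $F(t,x)=t\phi(x)$ with $\phi=V$ lying in $S^7$ (since $\|X_4\|=1$), horizontal because $J\phi=JX_4$ is normal to $\spa\{X_1,X_2,X_3\}$, and minimal C-totally real by comparing \eqref{Gaussproj} with the restriction of $A$; its Hopf projection is the required special Lagrangian 3-fold in $\Cx P^3$, whose $S_3$-symmetry is inherited from the restriction of the cubic form to $\{X_1,X_2,X_3\}$. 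The main technical obstacle will be in this last case: cleanly pinning down $W$ as constant and verifying that both the minimality and the $S_3$-symmetric shape of the cubic form survive the Hopf projection will require careful book-keeping of which connection coefficients contribute tangentially to $N^3$ and which along the radial $X_4$ direction.
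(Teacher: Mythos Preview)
Your proposal is correct and follows essentially the same route as the paper: both use the warped-product decomposition $\spa\{X_4\}\times_{e^f}\mathcal{N}_+$, solve $X_4(a_3)=a_3^2$ to split into the product case $a_3=0$ and the cone case $a_3=-1/t$, and in the latter integrate $D_{X_4}X_4=0$ to an affine expression in $t$ whose constant part is shown to vanish. Your write-up is in fact more explicit than the paper's, which simply says ``calculations are similar to the case where $b_2=0$ and $a_2\neq 0$'' for the cone case and refers to \eqref{refren} for the verification that $\phi$ inherits the $S_3$-symmetric special Lagrangian structure.
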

\begin{proof}
We find according to \eqref{Conmat2} and \eqref{Gauss} that $\spa\{X_4\}$ and $\mathcal{N}_+$ satisfy the conditions for a warped product. So $M$ can be decomposed as $\Rl \times_{e^f} N^3$, where $X_4(f)=-a_3$. We can solve
\begin{equation*}
X_4(a_3)=\pard{a_3}{t}=a_3^2.
\end{equation*}
This equation has $2$ possible solutions.\\
First, we assume $a_3=0$. In this case $M$ is simply the manifold $\Rl\times N^3$. Hence the immersion, up to isometry, can be given as
\begin{equation*}
F(t,s,u,v)=(t,\phi(s,u,v)),
\end{equation*}
where $\phi$ is a $3$-fold immersed in the subspace $\Cx^3$ orthogonal to $X_4$ and $J X_4$. Similar calculations as in \eqref{refren} show that this can be any special Lagrangian submanifold in $\Cx^3$, given the presence of an $S_3$-symmetry in the second fundamental form.\\
The second solution, after a translation of $t$, is given by $a_3=-\frac{1}{t}$. The calculations are similar to the case where $b_2=0$ and $a_2\neq 0$. This gives the required result.
\end{proof}

The last case in $\Cx^4$ is the one where there is no integrable distribution containing $\mathcal{N}_1$ other than the whole tangent bundle. In this case, we can no longer rely on an obvious warped product structure. We can attempt to introduce a set of independent coordinates and reduce \eqref{Gauss} to a system of PDE's on $\Cx^4$ using as little functions as possible. We now use \eqref{gaua122} to \eqref{gaua41} to construct a coordinate frame from $\{X_1,X_2,X_3,X_4\}$. Since $\mathcal{N}_2$ is integrable, it is a good idea to choose $X_4=T$ and $\mu X_3=S$. Requiring that $\lie{S,T}=0$ implies that
\begin{equation*}
\lie{\mu X_3,X_4}=\mu \lie{X_3,X_4}-X_4(\mu) X_3=-\left(\mu a_3+X_4(\mu)\right) X_3=0.
\end{equation*}
We can find such a $\mu$ by taking $\mu=\frac{1}{\sqrt{\abs{\epsilon+a_3^2}}}$.  The equation $a_3^2+\epsilon=0$ implies that $a_3$ is a constant and hence $(a_2-ib_2)(a_6+ib_6)=0$. This will correspond to the integrability of either $\mathcal{N}_1$ or $\mathcal{N}_+$. Therefore $\mu$ is well defined.

Vector fields $U$ and $V$ can be sought such that every couple out of $\{S,T,U,V\}$ commutes. Such an attempt can be made, writing
\begin{equation}
U+iV=(\rho_1-i\rho_2)\Big( (X_1+iX_2)+(\alpha_1+i\beta_1) S+(\alpha_2+i\beta_2)T \Big)\label{coorvf}
\end{equation}
We rename the following expressions:
\begin{align*}
\rho&=\rho_1-i\rho_2,\\
\gamma_j&=\alpha_j+i \beta_j\qquad j\in \{1,2\}.
\end{align*}
After calculating the Lie brackets of these four vector fields, the following conditions on the introduced functions make the vector fields commute:
\begin{align}
X_4(\rho) &=-a_3 \rho,\label{coor4r}\\
X_3(\rho) &=-\left(a_2+\frac{2}{3}i  b_2\right) \rho, \label{coor3r}\\
(X_1-i X_2)(\rho) &=\left(b_1+ia_1\right)\rho, \label{coor12r}\\
X_4(\gamma_1) &=-\frac{1}{\mu} (a_6+i b_6)+a_3 \gamma_1, \label{coor4g1}\\
X_3(\gamma_1) &=\frac{1}{\mu^2}(X_1+iX_2)(\mu)+\gamma_1\left(a_2+\frac{2}{3}i  b_2\right), \label{coor3g1}\\
X_2(\alpha_1)-X_1(\beta_1) &=a_1 \alpha_1+b_1 \beta_1-\frac{2}{\mu} b_2 , \label{coor12g1}\\
X_4(\gamma_2) &=a_3 \gamma_2, \label{coor4g2}\\
X_3(\gamma_2) &=(a_6+i b_6)+\left(a_2+\frac{2}{3}i b_2\right) \gamma_2, \label{coor3g2}\\
X_2(\alpha_2)-X_1(\beta_2) &=a_1 \alpha_2+b_1 \beta_2\label{coor12g2}.
\end{align}
The following result can be obtained.
\begin{lemma}
Suppose $f$ and $g$ are real valued functions on the manifold satisfying
\begin{equation*}
\begin{matrix}
S(f)=0, & T(f)=-1,\\ S(g)=-1, & T(g)=0,
\end{matrix}
\end{equation*}
and defining
\begin{equation*}
\begin{matrix}
X_1(f)=\alpha_2, & X_2(f)=\beta_2,\\ X_1(g)=\alpha_1, & X_2(g)=\beta_1,
\end{matrix}
\end{equation*}
then the functions $\alpha_i$ and $\beta_i$ obtained this way satisfy the conditions \eqref{coor4g1} to \eqref{coor12g2}.
\end{lemma}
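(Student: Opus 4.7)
The plan is to derive each of the equations \eqref{coor4g1}--\eqref{coor12g2} from a single Lie-bracket identity $[V,W](h)=V(W(h))-W(V(h))$ applied to $h\in\{f,g\}$ for a suitable pair $(V,W)\in\{(S,X_i),(T,X_i),(X_1,X_2)\}$. On one side of such an identity the bracket is expressed in the frame $\{X_1,\dots,X_4\}$ via the connection \eqref{Conmat2} and then acts on $f$ or $g$ through the definitions $X_1(f)=\alpha_2$, $X_2(f)=\beta_2$, $X_1(g)=\alpha_1$, $X_2(g)=\beta_1$; on the other side, the hypotheses $S(f)=0$, $T(f)=-1$, $S(g)=-1$, $T(g)=0$ reduce $V(W(h))-W(V(h))$ to derivatives of the $\alpha_i,\beta_i$ along $X_3$ or $X_4$.

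First I would compute the relevant brackets from \eqref{Conmat2}:
\begin{align*}
[X_3,X_1]&=a_2X_1-\tfrac{2b_2}{3}X_2-a_6X_4, & [X_3,X_2]&=\tfrac{2b_2}{3}X_1+a_2X_2-b_6X_4,\\
[X_4,X_1]&=a_3X_1+a_6X_3, & [X_4,X_2]&=a_3X_2+b_6X_3,\\
[X_1,X_2]&=-a_1X_1-b_1X_2-2b_2X_3,
\end{align*}
together with $[S,X_i]=\mu[X_3,X_i]-X_i(\mu)X_3$ and $[T,X_i]=[X_4,X_i]$, and record that $X_3(f)=\mu^{-1}S(f)=0$, $X_4(f)=-1$, $X_3(g)=-\mu^{-1}$, $X_4(g)=0$.

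Next I would evaluate each bracket on $f$ and on $g$. Combining the real equations obtained from $[S,X_1](f)$ and $[S,X_2](f)$ into a single complex one reproduces \eqref{coor3g2}; the analogous pair acting on $g$ yields \eqref{coor3g1}, the inhomogeneous term $(X_1+iX_2)(\mu)/\mu^2$ arising precisely from $-X_i(\mu)X_3(g)=X_i(\mu)/\mu$. The pairs $(T,X_1)$, $(T,X_2)$ acting on $f$ and on $g$ similarly give \eqref{coor4g2} and \eqref{coor4g1}, the inhomogeneous term $-(a_6+ib_6)/\mu$ in the latter originating from the $a_6X_3$ and $b_6X_3$ contributions in $[X_4,X_i]$ acting on $g$. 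Finally, $[X_1,X_2](f)$ gives \eqref{coor12g2}, and $[X_1,X_2](g)$ gives \eqref{coor12g1}, the extra $-2b_2/\mu$ term coming from the $-2b_2X_3$ contribution to $[X_1,X_2]$ acting on $g$.

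The only point that requires care is the correction term $-X_i(\mu)X_3$ in $[S,X_i]$ and the different values taken by $X_3$ on $f$ versus $g$; this distinction is exactly what produces, or conversely suppresses, the $\mu$-dependent inhomogeneous pieces in the target equations. Beyond this bookkeeping there is no substantive obstacle: the ten target real equations (four complex plus two real) correspond one-for-one to the ten bracket-on-function identities listed above.
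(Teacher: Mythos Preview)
Your proposal is correct and follows essentially the same route as the paper: the paper's proof consists of the single sentence ``Apply the relation $[X_i,X_j](h)=X_iX_j(h)-X_jX_i(h)=\nabla_{X_i}X_j(h)-\nabla_{X_j}X_i(h)$ on both functions, using \eqref{Conmat2},'' and your argument is precisely a detailed unpacking of this, with the minor cosmetic difference that you organize the computations via $[S,X_i]$ and $[T,X_i]$ rather than $[X_3,X_i]$ and $[X_4,X_i]$ directly.
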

It is interesting to see that this way the vector fields
\begin{align*}
\tilde{U}&= X_1+\alpha_1S+\alpha_2T,\\
\tilde{V}&= X_2+\beta_1 S+\beta_2 T,
\end{align*}
satisfy $\tilde{U}(f)=\tilde{U}(g)=\tilde{V}(f)=\tilde{V}(g)=0$. Furthermore $\tilde{U}$ and $\tilde{V}$ are independent of one-another and they span the distribution which is the intersection of the kernel of $\dv f$ and $\dv g$. Note that this distribution is indeed $2$-dimensional since both forms have a hyperplane as a kernel and these kernels do not coincide, since the $1$-forms are linearly independent. Using the dimension theorem, they have a $2$-dimensional intersection. Construction \eqref{coorvf} is just a complex rotation of these two vector fields in that distribution. This way, it is clear that $f$ and $g$ serve as coordinates $s$ and $t$ conjugate to $S$ and $T$.
\begin{proof}
Apply the relation
\begin{equation*}
\lie{X_i,X_j}(f)=X_iX_j(f)-X_jX_i(f)=\nabla_{X_i}X_j(f)-\nabla_{X_j}X_i(f)
\end{equation*}
on both functions, using \eqref{Conmat2}.
\end{proof}
A suitable function for $f$ is easily found, since $S(a_3)=0$. Let $f$ be a function of $a_3$, then
\begin{equation*}
X_4(f)=f' (\epsilon+a_3^2)=-1 \Leftrightarrow f'=-\frac{1}{\epsilon+a_3^2}.
\end{equation*}
Hence $f$ can be given by
\begin{equation*}
f=-\int{\frac{1}{\epsilon+a_3^2}\dv a_3}.\label{ffun}
\end{equation*}
This also determines $\gamma_2$ completely, since using \eqref{gaua122} yields
\begin{align*}
\gamma_2&=(X_1+iX_2)(f)=f' (X_1+iX_2)(a_3)\\
        &=\frac{a_2a_6+b_2b_6}{\epsilon+a_3^2}+i\frac{a_2b_6-b_2a_6}{\epsilon+a_3^2}.
\end{align*}
As for the function $g$, the complex valued function $z=\mu (a_2+i b_2)$ can be considered and calculations show
\begin{align*}
X_4(z)&=-\mu a_3 (a_2+i b_2)+\mu a_3(a_2+i b_2)=0,\\
S(z)&=\mu^2 \left(\epsilon+a_3^2+(a_2+i b_2)^2 \right)=\sign\left(\epsilon+a_3^2\right)+z^2.
\end{align*}
Rewriting $\tilde{\epsilon}=\sign(\epsilon+a_3^2)$, we find that $z$ is useful as long as $z^2+\tilde{\epsilon}\neq 0$. When $\tilde{\epsilon}=+1$, this occurs when $a_2=0$ and $\abs{b_2}=\sqrt{\epsilon+a_3^2}$. When $\tilde{\epsilon}=-1$, this occurs when $\abs{a_2}=\sqrt{\abs{\epsilon+a_3^2}}$ and $b_2=0$, resulting in $\mathcal{N}_1$ being integrable.

 First we assume that $z^2\neq -\tilde{\epsilon}$. Then the function $g$ can be calculated as the real part of a function $G$ of $z$ given by
\begin{equation*}
S(G)=(\tilde{\epsilon}+z^2) G'=-1 \Leftrightarrow G'=-\frac{1}{\tilde{\epsilon}+z^2}.
\end{equation*}
A function $\rho$ still has to be constructed satisfying \eqref{coor4r} to \eqref{coor12r}. Define a function $H$ as
\begin{equation*}
H=\rho^3 r (z^2+\tilde{\epsilon})\, \abs{\epsilon+a_3^2}.
\end{equation*}
This function is a constant on the submanifold and can be taken to be equal to $1$. This defines a function $\rho$ satisfying the necessary conditions.

Using the Frobenius theorem in \cite{nomkob1}, a coordinate frame on the submanifold is given by
\begin{align*}
X_4&=T,\\
X_3&= \frac{1}{\mu} S,\\
X_1+iX_2&= \frac{U+iV}{\rho}-\gamma_1 S-\gamma_2 T.
\end{align*}
We can describe the dependence of $a_6+i b_6$ on $(s,t)$ by writing
\begin{equation*}
a_6+i b_6=\frac{k_3+ik_4}{\rho} \sqrt{\abs{a_3^2+\epsilon}} \left(\bar{z}^2+\tilde{\epsilon}\right)^{\frac{-1}{2}}.
\end{equation*}
The functions $k_3$ and $k_4$ depend solely on $(u,v)$. This expression is obtained from \eqref{gaua36} and \eqref{gaua46}.
The rest of the equations in \eqref{Gauss} can be rewritten and solved. Applying our method for $\epsilon=0$, we find after a translation of the coordinates that
\begin{align*}
a_3 &= -\frac{1}{t},\\
x=\frac{\sin(2 s)}{\cos(2 s)+\cosh(2 k_1)}&\Rightarrow a_2=-\frac{\sin(2 s)}{t(\cos(2 s)+\cosh(2 k_1))},\\
y=\frac{\sinh(2 k_1)}{\cos(2 s)+\cosh(2 k_1)} &\Rightarrow b_2=-\frac{\sinh(2 k_1)}{t(\cos(2 s)+\cosh(2 k_1))},\\
r&=\frac{e^{k_2}}{t \sqrt{\cos(2s)+\cosh(2 k_1)}}.
\end{align*}
Here the functions $k_1$ and $k_2$ depend solely on $(u,v)$. Then we can use \eqref{gaua122} to find an expression for $\gamma_1$ in terms of the coordinates. Equation \eqref{cod1r} can be used to find an expression for $a_1$ and $b_1$ in terms of the coordinates. We obtain
\begin{align*}
\gamma_1&=\frac{(k_3+ik_4)\cos(s-ik_1)+t\left(\pard{k_1}{v}-i\pard{k_1}{u}\right)}{t\rho},\\
a_1&=\frac{2^{\frac{2}{3}}e^{\frac{2}{3}k_2}}{3t^3\left(\cos(2s)+\cosh(2k_1)\right)^2}\big(t\big((\cos(2s)+\cosh(2k_1))(\rho_1\pard{k_2}{v}+\rho_2\pard{k_2}{u})\\
&\qquad+\sin(2s)(\rho_1\pard{k_1}{u}-\rho_2\pard{k_1}{v})-\sinh(2k_1)(\rho_1\pard{k_1}{v}+\rho_2\pard{k_1}{u})\big)\\
&\qquad + \sinh(2k_1) \left(\cos(s)\cosh(k_1)(k_4\rho_2-k_3\rho_1)+\sin(s)\sinh(k_1)(k_4\rho_1+k_3\rho_2)\right)\big),\\
b_1&=\frac{2^{\frac{2}{3}}e^{\frac{2}{3}k_2}}{3t^3\left(\cos(2s)+\cosh(2k_1)\right)^2}\big(t\big((\cos(2s)+\cosh(2k_1))(\rho_2\pard{k_2}{v}-\rho_1\pard{k_2}{u})\\
&\qquad+\sin(2s)(\rho_2\pard{k_1}{u}+\rho_1\pard{k_1}{v})+\sinh(2k_1)(\rho_1\pard{k_1}{u}-\rho_2\pard{k_1}{v})\big)\\
&\qquad + \sinh(2k_1) \left(\sin(s)\sinh(k_1)(k_4\rho_2-k_3\rho_1)-\cos(s)\cosh(k_1)(k_4\rho_1+k_3\rho_2)\right)\big).
\end{align*}

Now every function on the submanifold is expressed in terms of $(s,t,u,v)$, possibly indirectly through $\{k_1,k_2,k_3,k_4\}$. Demanding that the other Gauss equations are satisfied gives partial differential equations for $k_i$, given by
\begin{equation}
\begin{split}
\frac{\partial k_4}{\partial u}-\frac{\partial k_3}{\partial v}&=2 \tanh(k_1) \left(k_3 \frac{\partial k_1}{\partial v}-k_4\frac{\partial k_1}{\partial u}\right),\\
\frac{\partial k_4}{\partial v}+\frac{\partial k_3}{\partial u}&=-2 \coth(k_1) \left(k_3 \frac{\partial k_1}{\partial u}+k_4\frac{\partial k_1}{\partial v}\right),\\
\Delta k_2&=3*2^{\frac{1}{3}}e^{-\frac{2 k_2}{3}}\left(-e^{2k_2}+\cosh(2k_1)\right),\\
\Delta k_1&=-2^{\frac{1}{3}}e^{-\frac{2k_2}{3}} \sinh(2k_1).\label{constraa}
\end{split}
\end{equation}
Now we return to the case where $-1=z^2$ and $\tilde{\epsilon}=1$. We assume first that $\epsilon$ isn't specified. In this case $a_2=0$,  $b_2=\pm\sqrt{\epsilon+a_3^2}$ and $S(z)=0$, so $z$ is insufficient to construct the function $g$. Equations \eqref{Gauss} are reduced to
\begin{align}
(X_1-iX_2)(a_6+i b_6)&=-i(b_1+ia_1)(a_6+i b_6),\label{gauab16}\\
X_3(a_6+i b_6)&=\pm i\frac{5}{3} \sqrt{\epsilon+a_3^2} (a_6+ib_6),\label{gauab36}\\
X_4(a_6+i b_6)&=2a_3(a_6+ib_6),\label{gauab46}\\
X_1(b_1)-X_2(a_1)&=2r^2-\frac{8}{3} (\epsilon+a_3^2)-a_1^2-b_1^2,\label{gauab121-}\\
X_1(a_1)+X_2(b_1)&=0,\label{gauab121+}\\
X_4(a_1+ib_1) &= a_3(a_1+ib_1)\pm\sqrt{\epsilon+a_3^2}\frac{(a_6+ib_6)}{3},\label{gauab41}\\
X_3(a_1+ib_1)&=\frac{i}{3}\left(a_3(a_6+ib_6)\pm 2\sqrt{\epsilon+a_3^2}(a_1+ib_1)\right).\label{gauab31}
\end{align}
The first equation is obtained from applying integrability on $a_3$. Now we define
$$w=\frac{a_6+i b_6}{\epsilon+a_3^2},$$
which after derivation gives
\begin{align*}
X_4(w)&=-2 a_3 \frac{(a_6+i b_6)}{\epsilon+a_3^2}+2 a_3 \frac{a_6+i b_6}{\epsilon+a_3^2}=0,\\
S(w)&=\pm i\frac{5}{3}w.
\end{align*}
The resulting differential equation for a function $G$ of $w$ will be
\begin{equation*}
S(G)=\pm G' i\frac{5}{3} w=-1 \Leftrightarrow G'=\pm i \frac{3}{5w}.
\end{equation*}
The solution is that $G$ is a logarithm of $w$. We find that $H$ defined by
\begin{equation*}
H=w^2(\epsilon+a_3^2)^2\rho^5 r
\end{equation*}
is a constant and hence can be used to express $\rho$. We can thus solve $w$ as
\begin{equation*}
w=e^{k_1\pm i\frac{5}{3}s}=e^{k_1}\left(\cos(\frac{5}{3}s)\pm i\sin(\frac{5}{3}s)\right).
\end{equation*}
Applying \eqref{gaua43},\eqref{gauab46},\eqref{gauab36},\eqref{cod4r} and \eqref{cod3r} when $\epsilon=0$ yields
\begin{align*}
a_3&=-\frac{1}{t},\\
a_6&=e^{k_1}\frac{\cos(\frac{5}{3}s)}{t^2},\\
b_6&=\pm e^{k_1} \frac{\sin(\frac{5}{3}s)}{t^2},\\
r&=\frac{e^{k_2}}{t}.
\end{align*}
The equation \eqref{cod1r} now gives $a_1+ib_1$ immediately without going through $\gamma_1$ because of \eqref{cod3r}. The final unknown, $\gamma_1$ can then be determined using \eqref{gauab16}. When we pick $b_2=a_3$, we obtain
\begin{align*}
\gamma_{1+}&=\frac{-5 e^{k_1+\frac{5si}{3}}+e^{\frac{2k_1+k_2}{5}+\frac{2si}{3}}t\left((\pard{k_2}{v}-3\pard{k_1}{v})-i(\pard{k_2}{u}-3\pard{k_1}{u})\right)}{5t^2},\\
a_{1+}&=\frac{e^{k_1}\cos(\frac{5s}{3})+e^{\frac{2k_1+k_2}{5}}t\left(\cos(\frac{2s}{3})\pard{k_2}{v}+\sin(\frac{2s}{3})\pard{k_2}{u}\right)}{3t^2},\\
b_{1+}&=\frac{e^{k_1}\sin(\frac{5s}{3})-e^{\frac{2k_1+k_2}{5}}t\left(\cos(\frac{2s}{3})\pard{k_2}{u}-\sin(\frac{2s}{3})\pard{k_2}{v}\right)}{3t^2},\\
\end{align*}
and for $b_2=-a_3$ we obtain
\begin{align*}
\gamma_{1-}&=\frac{-5 e^{k_1-\frac{5si}{3}}+e^{\frac{2k_1+k_2}{5}-\frac{2si}{3}}t\left((3\pard{k_1}{v}-\pard{k_2}{v})-i(3\pard{k_1}{u}-\pard{k_2}{u})\right)}{5t^2},\\
a_{1-}&=\frac{-e^{k_1}\cos(\frac{5s}{3})+e^{\frac{2k_1+k_2}{5}}t\left(\cos(\frac{2s}{3})\pard{k_2}{v}-\sin(\frac{2s}{3})\pard{k_2}{u}\right)}{3t^2},\\
b_{1-}&=\frac{e^{k_1}\sin(\frac{5s}{3})-e^{\frac{2k_1+k_2}{5}}t\left(\cos(\frac{2s}{3})\pard{k_2}{u}+\sin(\frac{2s}{3})\pard{k_2}{v}\right)}{3t^2}.\\
\end{align*}
 Equations \eqref{gauab121-} and \eqref{coor12g1}  result in restrictions on the functions $k_1$ and $k_2$ of $(u,v)$ given by
\begin{equation}
\begin{split}
\Delta k_2 &= e^{-\frac{2}{5}(2k_1+k_2)}(8-6e^{2k_2}),\\
\Delta k_1 &= e^{-\frac{2}{5}(2k_1+k_2)}(6-2e^{2k_2}). \label{constrab}
\end{split}
\end{equation}
These equations are valid for both $b_2=\pm a_3$. Using the constructed functions, the rest of the Gauss equations don't impose further conditions. We can summarize this result in the following theorem.
\begin{theorem}
Each special Lagrangian submanifold of $\Cx^4$ with $SO(2)\rtimes S_3$-symmetry where the only integral distribution containing $\mathcal{N}_1$ is the tangent bundle, can be constructed in the way above using either functions $\{k_1,k_2,k_3,k_4\}$ subject to \eqref{constraa} or functions $\{k_1,k_2\}$ subject to \eqref{constrab}. Conversely, each such a construction results in such a submanifold, unique up to local isometry.
\end{theorem}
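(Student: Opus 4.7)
The plan is to split the proof into a direct part (every such submanifold arises from the construction described above) and a converse (every such construction yields a special Lagrangian submanifold unique up to isometry). For the direct part I would continue the argument already begun in the paragraphs preceding the theorem, so most of the work is organizing and finishing what is in place.

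For the direct part, I would take the commuting frame $\{S,T,U,V\}$ built from $T=X_4$, $S=\mu X_3$ with $\mu=1/\sqrt{|\epsilon+a_3^2|}$, and $U+iV$ as in \eqref{coorvf} with $\rho,\gamma_1,\gamma_2$ satisfying \eqref{coor4r}–\eqref{coor12g2}. By the Frobenius theorem of \cite{nomkob1} this gives local coordinates $(s,t,u,v)$ with $S=\partial_s$, $T=\partial_t$. Using \eqref{gaua33}–\eqref{gaua43} one sees $a_3$ depends only on $t$, and for $\epsilon=0$ a translation gives $a_3=-1/t$. I would then separate according to whether $z^2+\tilde\epsilon\ne 0$ (where $z=\mu(a_2+ib_2)$) or $z^2=-\tilde\epsilon$. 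In the former case I integrate the equations for $z$, for $r$ and for the constant $\rho^3 r (z^2+\tilde\epsilon)|\epsilon+a_3^2|$ (normalized to $1$) to obtain the explicit $(s,t)$-formulas for $a_2,b_2,r$ displayed above, then \eqref{gaua122} yields $\gamma_1$ and \eqref{cod1r} yields $a_1,b_1$, with $a_6+ib_6$ recovered from \eqref{gaua36}–\eqref{gaua46} in terms of $(k_3,k_4)$. In the latter case $S(z)=0$, so $z$ is not usable; instead $w=(a_6+ib_6)/(\epsilon+a_3^2)$ satisfies $T(w)=0$ and $S(w)=\pm\tfrac{5i}{3}w$, and an analogous integration produces the formulas in $k_1,k_2$ only. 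In both subcases the two remaining Gauss equations—\eqref{gaua121-} and the integrability condition \eqref{coor12g1}—collapse, after substitution, to the stated systems \eqref{constraa} and \eqref{constrab}.

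For the converse, I would start from a solution $\{k_i(u,v)\}$ of \eqref{constraa} (respectively $\{k_1,k_2\}$ of \eqref{constrab}) on an open set $V\subset\Rl^2$. Using the closed-form expressions derived above, I would define on a neighborhood $W\subset\Rl^4$ of a chosen point the functions $r,a_1,b_1,a_2,b_2,a_3,a_6,b_6,\rho,\mu,\gamma_1,\gamma_2$, the orthonormal frame $\{X_1,X_2,X_3,X_4\}$ dual to the appropriate coframe in $(s,t,u,v)$, the connection coefficients by \eqref{Conmat2}, and the tensor $A$ by \eqref{secondform}. By construction, \eqref{cod1r}–\eqref{cod4r} and all of \eqref{gaua122}–\eqref{gaua41} hold, with \eqref{gaua121-} and \eqref{coor12g1} holding precisely because the $k_i$ solve the PDE system. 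Theorem 1.1 then produces a Lagrangian isometric immersion $\phi\colon W\to \Cx^4$ inducing $g$ and $JA$ as first and second fundamental forms, unique up to ambient isometry; minimality is immediate from $\tr A_{X_i}=0$, so $\phi$ is special Lagrangian with the prescribed $SO(2)\rtimes S_3$-symmetry.

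The main obstacle is bookkeeping. After substituting the explicit $(s,t,u,v)$-dependence of every function in \eqref{Conmat2} and \eqref{gaua122}–\eqref{gaua41}, one must verify that every Gauss equation reduces either to a trivial identity, to a relation already built into the construction (such as those defining $\rho$, $\gamma_1$, $\gamma_2$), or to one of the PDEs in \eqref{constraa} or \eqref{constrab}, and that no additional hidden constraint survives. The delicate pieces are the mixed $(X_1,X_2)$-derivatives involving $\rho$ and $\gamma_1$, where the vanishing $[X_1,X_2]$ integrability needs to be traced through the complex rotation in \eqref{coorvf}. Once this consistency check is done, the Frobenius step and the appeal to Theorem 1.1 are automatic, and the two bijective correspondences asserted in the theorem follow.
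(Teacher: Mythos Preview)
Your proposal is correct and follows essentially the same route as the paper: the theorem there is stated as a summary of the preceding coordinate construction, and your organization into a direct part (integrating \eqref{gaua122}--\eqref{gaua41} in the frame $\{S,T,U,V\}$ until only the $(u,v)$-constraints \eqref{constraa} or \eqref{constrab} remain) and a converse part (rebuilding $g$, $\nabla$, $A$ from a solution $\{k_i\}$ and invoking Theorem~1.1) is exactly the intended argument, with the converse made more explicit than the paper bothers to. One small imprecision: in the generic subcase the residual system \eqref{constraa} has four equations, so it is not literally just \eqref{gaua121-} and \eqref{coor12g1} that survive---several of the mixed equations \eqref{gaua126-}, \eqref{gaua121+}, \eqref{gaua131}, \eqref{gaua231} contribute as well---but this affects only your bookkeeping sentence, not the strategy.
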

In the upcoming sections we will consider the construction for $\epsilon=\pm 1$.

\section{Submanifolds in $\Cx P^4$.}
\subsection{The case where $b_2= 0$.} This means that both $\mathcal{N}_1$ and $\mathcal{N}_2$ are integrable distributions. We can assume $a_3=0$.  However, the Gauss equation
\begin{equation}
X_3(a_2)=1+a_2^2 \label{1+a2k}
\end{equation}
no longer allows for $a_2$ being a constant. The following result is obtained:
\begin{theorem}
Suppose $M$ is a special Lagrangian submanifold in $\Cx P^4$ having $SO(2)\rtimes S_3$-symmetry. Suppose $\mathcal{N}_1$ is integrable. Then $M$ can be lifted horizontally to a submanifold in the unit sphere of $\Cx^5$ through $F$ and this lift is congruent to
\begin{equation}
F(t,s,u,v)=\left(\phi(u,v) \cos(s),\sin(s)\cos(t),\sin(s)\sin(t)\right),\label{cp1}
\end{equation}
where $\phi$ is the horizontal lift of a special Lagrangian submanifold in $\Cx P^2$ to the unit sphere in $\Cx^3$.
\end{theorem}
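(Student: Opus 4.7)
The plan is to realize $M$ via its horizontal lift $F\colon M \to S^9 \subset \Cx^5$ as an explicit warped product, with the lifts of the $\mathcal{N}_2$-leaves being pieces of standard $2$-spheres and the lifts of the $\mathcal{N}_1$-leaves being scaled copies of a horizontal C-totally real immersion $\phi\colon N^2 \to S^5 \subset \Cx^3$. First I normalize the frame: since $b_2 = 0$, both $\mathcal{N}_1$ and $\mathcal{N}_2$ are integrable, and the residual $SO(2)$-freedom in $\mathcal{N}_2$ lets me rotate $X_3$ into the $\mathcal{N}_2$-projection of $\nabla_{X_1}X_1$, giving $a_3 \equiv 0$. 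A direct recomputation of the Gauss equation for the indices $(4,1,1,4)$---keeping the contribution $-a_2 d_6$ that the simplified \eqref{gaua43} suppresses---yields $X_4(a_3) = \epsilon + a_3^2 - a_2 d_6$, so with $a_3 \equiv 0$ and $\epsilon = 1$ we obtain $d_6 = 1/a_2$. Further Gauss computations force $c_6 = 0$ and $a_6 = b_6 = 0$, leaving only $a_1, b_1, a_2$ as unknowns in the connection.

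With these simplifications, $\mathcal{N}_2$ is totally geodesic in $M$ and $\mathcal{N}_1$ is umbilical with mean curvature $H = a_2 X_3 \in \mathcal{N}_2$, while the specializations of \eqref{gaua42} and \eqref{gaua122} force $a_2$ to depend only on the $\mathcal{N}_2$-coordinates. Hiepko's theorem (Section 2.3) then decomposes $M$ locally as $L^2 \times_\rho N^2$, with $L^2$ integrating $\mathcal{N}_2$. Picking arc-length $s$ along $X_3$, equation \eqref{1+a2k} integrates to $a_2 = \tan s$ (after translation), and matching $H = a_2 X_3$ with the Hiepko formula gives $\rho = \cos s$, absorbing any multiplicative constant into the metric of $N^2$.

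Using \eqref{Gaussproj}, the second fundamental form of an $L^2$-leaf in $\Cx^5$ collapses to the single term $-\langle X, Y\rangle F$ (since $\mathcal{N}_2$ is totally geodesic in $M$ and $A$ vanishes on $\mathcal{N}_2 \times \mathcal{N}_2$), so each leaf is totally umbilical in $\Cx^5$ and lies on a unit $2$-sphere inside a $3$-dimensional real linear subspace. Integrating the resulting ODE system $F_{ss} = -F$, $F_{st} = \cot s \cdot F_t$, $F_{tt} = -\sin s\cos s \cdot F_s - \sin^2 s \cdot F$ gives
\[
F(t, s, u, v) = \phi(u, v)\cos s + B_1(u, v)\sin s\cos t + B_2(u, v)\sin s\sin t.
\]
The block structure of the warped metric and the unit norm of $X_3, X_4$ on each $L^2$-leaf then force $B_1, B_2$ to be $(u, v)$-independent, mutually orthonormal, and orthogonal to both $\phi$ and $i\phi$. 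A unitary change of coordinates puts $B_1, B_2$ into the two real axes of the last $\Cx^2$-factor of $\Cx^5$, so $\phi$ takes values in $S^5 \subset \Cx^3$. Horizontality of $F$ restricts to horizontality of $\phi$, and the minimality of $M$ together with the warped-product identity gives minimality of $\phi$, so $\phi$ is the horizontal lift of a special Lagrangian surface in $\Cx P^2$; this yields \eqref{cp1}. The main obstacle is verifying that $B_1, B_2$ are truly constant and that the cubic form inherited on $N^2$ satisfies the special Lagrangian equations in $\Cx P^2$; this uses the full cross-Codazzi equations between $\mathcal{N}_1$ and $\mathcal{N}_2$ (in particular \eqref{gaua121-} and \eqref{gaua121+}).
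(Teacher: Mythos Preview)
Your proof is correct and follows essentially the same route as the paper: normalize the frame so that $a_3=0$, observe that with $\epsilon=1$ the Gauss equation $R(X_4,X_1)X_1\!\cdot\!X_4$ forces $d_6=1/a_2$ (and $c_6=a_6=b_6=0$), recognize the double warped structure with base $\mathcal{N}_2$, integrate $a_2'=1+a_2^2$ to $a_2=\tan s$, and then solve the linear ODE system coming from $D_{\partial_s}\partial_s=-F$, $D_{\partial_t}\partial_s=\cot s\,\partial_t$, $D_{\partial_t}\partial_t=-\sin s\cos s\,\partial_s-\sin^2 s\,F$ to obtain \eqref{cp1}. The one place where your write-up is thinner than the paper is the constancy of $B_1,B_2$: the ``block structure of the warped metric'' alone is not enough, since constancy is a statement about the ambient derivative $D$; the clean way (which the paper uses) is to compute $D_{X_i}\partial_s=-\tan s\,F_*X_i$ from \eqref{Gaussproj} and compare with $\partial_s\bigl(\phi_*X_i\cos s+B_*X_i\sin s\bigr)$ to get $B_*X_i=0$, which immediately gives $B_1,B_2$ independent of $(u,v)$.
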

\begin{proof}
Equations \eqref{Gauss} reduce $\nabla$ to
\begin{align*}
\begin{matrix}
&\nabla_{X_1}X_1=a_1X_2+a_2X_3&\vline & \nabla_{X_1}X_2=-a_1X_1,\\
&\nabla_{X_2}X_1=b_1X_2 &\vline & \nabla_{X_2}X_2=-b_1X_1+a_2X_3,\\
&\nabla_{X_3}X_1=0 &\vline & \nabla_{X_3}X_2=0,\\
&\nabla_{X_4}X_1=0 &\vline & \nabla_{X_4}X_2=0,\\
&\nabla_{X_1}X_3=-a_2X_1 &\vline & \nabla_{X_1}X_4=0,\\
&\nabla_{X_2}X_3=-a_2X_2&\vline & \nabla_{X_2}X_4=0,\\
&\nabla_{X_3}X_3=0 &\vline & \nabla_{X_3}X_4=0,\\
&\nabla_{X_4}X_3=\frac{X_4}{a_2} &\vline & \nabla_{X_4}X_4=-\frac{X_3}{a_2}.\\
\end{matrix}
\end{align*}
The distributions $\mathcal{N}_1$ and $\mathcal{N}_2$ satisfy the conditions for a warped product $N_2\times_{e^f}N_1$. Furthermore, the distributions $\spa\{X_3\}$ and $\spa\{X_4\}$ satisfy those of a warped product and we can write $M=\Rl\times_{e^g}\Rl\times_{e^f} N_1$. The functions $f$ and $g$ depend solely on the parameter corresponding to $X_3$ and are given by $X_3(f)=-a_2$ and $X_3(g)=\frac{1}{a_2}$. We can assume $X_3=\pard{}{s}$ on the submanifold. We can also find a function $\mu(s)$ such that $\mu X_4=\pard{}{t}$. To find a suitable $\mu$, we solve
\begin{equation*}
\lie{X_3,\mu X_4}=\left(X_3(\mu)-\frac{\mu}{a_2}\right)X_4=\left(\mu'(1+a_2^2)-\frac{\mu}{a_2}\right)X_4=0.
\end{equation*}
The function $\mu=\frac{a_2}{\sqrt{1+a_2^2}}$ satisfies this equation. We can find $a_2(s)$ by solving
\begin{equation*}
\pard{a_2}{s}=1+a_2^2\Rightarrow a_2=\tan(s).
\end{equation*}
Hence $\mu(s)=\sin(s)$ and we calculate for $i\in \{1,2\}$ that
\begin{align*}
D_{\pard{}{s}}\pard{}{s}&=\pardd{F}{s}=-F\\
          &\Rightarrow F=A \cos(s)+B\sin(s),\\
D_{\pard{}{t}}\pard{}{s}&=\pards{F}{t}{s}=-\pard{A}{t}\sin(s)+\pard{B}{t}\cos(s)\\
      &=\cot(s)\pard{F}{t}=\frac{\cos(s)^2}{\sin(s)}\pard{A}{t}+\cos(s)\pard{B}{t}\\
      &\Rightarrow \pard{A}{t}=0,\\
D_{X_i}\pard{}{s}&=\pard{F_*X_i}{s}=-A_*X_i\sin(s)+B_*X_i\cos(s)\\
          &=-\tan(s) X_i=-A_*X_i \sin(s)-\frac{\sin(s)^2}{\cos(s)} B_*X_i\\
          &\Rightarrow B_*X_i=0.
\end{align*}
So $A$ is the immersion of $N_1$ and $B$ is a curve tangent to $X_4$. Because $F$ lies in the unit sphere, one has
\begin{equation*}
\scal{F,F}=\cos(s)^2\scal{A,A}+\sin(s)^2\scal{B,B}+\sin(2s)\scal{A,B}
\end{equation*}
which implies that $A$ and $B$ have both unit length and are orthogonal. We can also calculate
\begin{align*}
D_{\pard{}{t}}\pard{}{t}&=-\cos(s)\sin(s)\pard{F}{s}-\sin(s)^2 F=-\sin(s)B\\
    &=\pardd{F}{t}=\sin(s)\pardd{B}{t}\\
    &\Rightarrow B=B_1\cos(t)+B_2\sin(t).
\end{align*}
Vector fields $B_1$ and $B_2$ are constant, normalized and orthogonal. This follows from the fact that $\scal{B,B}=1$. Finally similar to \eqref{refren}, $A$ can be shown to be any special Lagrangian submanifold in $\Cx P^2$ lifted to the unit sphere in $\Cx^3$ orthogonal to $B_1$ and $B_2$ directions. Fixing $B_1$ and $B_2$ by an isometry leads to \eqref{cp1}.
\end{proof}
\subsection{The case where $b_2\neq0$.}
When $\mathcal{N}_+$ is integrable, so when $a_6=b_6=0$, the equations for $\nabla$ are given by \eqref{Conmat2}. We have:
\begin{theorem}
Suppose $M$ is a special Lagrangian submanifold in $\Cx P^4$ having a local $SO(2)\rtimes S_3$ symmetry. Suppose $\mathcal{N}_+$ is integrable. Then $M$ can be lifted horizontally to a submanifold in the unit sphere of $\Cx^5$ through $F$ and is locally isometric to
\begin{equation}
F(t,s,u,v)=\left(\phi(s,u,v) \cos(t),\sin(t)\right),\label{cp2}
\end{equation}
where $\phi$ is the horizontal lift of a special Lagrangian submanifold in $\Cx P^3$ with $S_3$-symmetry to the unit sphere in $\Cx^4$.
\end{theorem}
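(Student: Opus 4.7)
The plan is to mirror the $\Cx^4$ argument used in the preceding theorem, replacing the cone construction by the spherical ansatz forced by $\epsilon=1$. Since $\mathcal{N}_+$ is integrable, setting $a_6=b_6=0$ in \eqref{Conmat2} and inspecting the resulting table shows $\nabla_{X_4}X_4=0$ and that the $X_4$-component of $\nabla_YY'$ for $Y,Y'\in\mathcal{N}_+$ equals $a_3\scal{Y,Y'}$. Thus the pair $(\spa\{X_4\},\mathcal{N}_+)$ satisfies the hypotheses of Hiepko's theorem and locally $M=\Rl\times_{e^f}N^3$ with $X_4(f)=-a_3$. The Gauss equation \eqref{gaua43} now reads $X_4(a_3)=1+a_3^2$, which has no constant solution; choosing $X_4=\pard{}{t}$ (possible since $\nabla_{X_4}X_4=0$) and translating $t$, I may assume $a_3=\tan t$.

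Next, pass to the horizontal lift $F\colon M\to S^9\subset\Cx^5$ and apply \eqref{Gaussproj}. From $A(X_4,X_4)=0$ I obtain $\pardd{F}{t}=-F$, so
\begin{equation*}
F(t,s,u,v)=P(s,u,v)\cos t+Q(s,u,v)\sin t
\end{equation*}
for some $\Cx^5$-valued maps $P$ and $Q$. In the same way $A(X_4,X_i)=0$ and $\nabla_{X_4}X_i=0$ give $\pard{F_*X_i}{t}=0$ for $i=1,2,3$, so those three push-forwards are independent of $t$. Because $[X_i,X_4]=-a_3X_i=-\tan(t)X_i$ for $i=1,2,3$, producing transverse coordinates $(s,u,v)$ by adapting the construction \eqref{coorvf}--\eqref{coor12g2} to $\epsilon=1$ yields coordinate vector fields of the form $\cos(t)$ times a $t$-independent section of $\mathcal{N}_+$. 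Comparing $\pard{F}{\sigma}=\pard{P}{\sigma}\cos t+\pard{Q}{\sigma}\sin t$ with the resulting factorization $\pard{F}{\sigma}=\cos(t)\,\xi_\sigma(s,u,v)$ forces $\pard{Q}{\sigma}=0$ for $\sigma\in\{s,u,v\}$, so $Q$ is a constant vector in $\Cx^5$.

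The conditions $\|F\|^2=1$ and the horizontality condition $\scal{iF,F_*Y}=0$ translate into $\|P\|=\|Q\|=1$, $\scal{P,Q}=0$ and $\scal{iP,Q}=0$. After a unitary isometry of $\Cx^5$ sending $Q$ to $(0,0,0,0,1)$, the remaining orthogonality forces $P=(\phi(s,u,v),0)$ for some $\Cx^4$-valued map $\phi$, producing
\begin{equation*}
F(t,s,u,v)=(\phi(s,u,v)\cos t,\sin t).
\end{equation*}
It remains to check that $\phi\colon N^3\to S^7\subset\Cx^4$ is the horizontal lift of a special Lagrangian $3$-fold with $S_3$-symmetry in $\Cx P^3$: horizontality and $\|\phi\|=1$ are inherited from $P$, and the restriction of \eqref{secondform} to $\{X_1,X_2,X_3\}$ is exactly the model shape of such a $3$-fold's second fundamental form, which together with \eqref{Gaussproj} on the slice $t=0$ produces the required C-totally real minimal immersion.

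I expect the main difficulty to be the coordinate construction: verifying that the $\cos(t)$-factorization of $\pard{F}{\sigma}$ holds simultaneously in all three transverse directions requires the $\epsilon=1$ analogue of the delicate construction performed via \eqref{coorvf}--\eqref{coor12g2} in the $\Cx^4$ setting. Once this factorization is in place, the conclusion that $Q$ is constant (rather than merely $t$-independent) is what drives the spherical ansatz, and the rest reduces to direct bookkeeping with \eqref{Gaussproj}.
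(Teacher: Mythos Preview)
Your overall arc matches the paper's exactly: establish the warped product $\Rl\times_{e^f}N^3$, solve $X_4(a_3)=1+a_3^2$ to get $a_3=\tan t$, use $D_{X_4}X_4=-F$ to write $F=P\cos t+Q\sin t$, show the $\sin t$ part is constant, and identify $P$ with a horizontal lift in $S^7$.

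The one place you diverge is in showing $Q$ is constant. The paper does this in a single line by computing $D_{X_i}X_4$ (not $D_{X_4}X_i$) for $i=1,2,3$: on one hand $D_{X_i}X_4=-P_*X_i\sin t+Q_*X_i\cos t$, on the other $D_{X_i}X_4=\nabla_{X_i}X_4=-\tan(t)\,X_i=-P_*X_i\sin t - Q_*X_i\tfrac{\sin^2 t}{\cos t}$, and comparing forces $Q_*X_i=0$. You instead compute $D_{X_4}X_i=0$ to get that $F_*X_i$ is $t$-independent, and then invoke a coordinate construction to pull off the $\cos t$ factor. Your route works, but the appeal to \eqref{coorvf}--\eqref{coor12g2} is misplaced: that machinery is built specifically for the \emph{non}-integrable case $a_6+ib_6\neq0$ and is irrelevant here. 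The warped product you already obtained from Hiepko's theorem hands you transverse coordinates $(s,u,v)$ whose coordinate fields are $t$-independent lifts from $N^3$; since $X_i=\tfrac{1}{\cos t}\tilde X_i$ with $\tilde X_i$ a lift from $N^3$, the $t$-independence of $F_*X_i$ immediately gives $F_*\tilde X_i=\cos(t)\,F_*X_i$, hence $\partial_\sigma Q=0$. So your ``main difficulty'' evaporates once you drop the reference to \eqref{coorvf}--\eqref{coor12g2} and use the warped product directly---or, more simply, just compute $D_{X_i}X_4$ as the paper does.
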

\begin{proof}
The manifold is a warped product $\Rl\times_{e^f} N^3$. Solving the Gauss equation
\begin{equation*}
X_4(a_3)=\frac{\partial a_3}{\partial t}=1+a_3^2
\end{equation*}
yields $a_3=\tan(t)$. For $i\in \{1,2,3\}$ this implies
\begin{align*}
D_{X_4}X_4&=\pardd{F}{t}=-F\\
          &\Rightarrow F=A\cos(t)+B\sin(t),\\
D_{X_i}X_4&=-A_*X_i \sin(t)+B_*X_i\cos(t)\\
          &=-\tan(t) X_i=-A_*X_i\sin(t)-B_*X_i\frac{\sin(t)^2}{\cos(t)}\\
          &\Rightarrow B_*=0.
\end{align*}
Thus $B$ is a constant vector field along the submanifold and $A$ is an immersion of a $3$-fold $N^3$. Using the fact that $F$ is of unit length, $A$ and $B$ are orthogonal and of unit length. Using calculations similar to \eqref{refren} $A$ is a C-totally real submanifold in $S^7$ having local $S_3$-symmetry, where $S^7$ lies in the subspace orthogonal to $B$ and $JB$. Applying a suitable isometry results in \eqref{cp2}.
\end{proof}
The method to solve the case where the only integrable distribution containing $\mathcal{N}_1$ is the tangent bundle, has been analyzed earlier for a non-specific complex space form. We can now fill in $\epsilon=1$ and we find for $z^2\neq -1$ that
\begin{align*}
a_3 &= \tan(t),\\
a_2&=\frac{\sin(2 s)}{\cos(t)(\cos(2 s)+\cosh(2 k_1))},\\
b_2&=\frac{\sinh(2 k_1)}{\cos(t)(\cos(2 s)+\cosh(2 k_1))},\\
r&=\frac{e^{k_2}}{\cos(t) \sqrt{\cos(2s)+\cosh(2 k_1)}},\\
a_6+ib_6&=\frac{k_3+ik_4}{\rho} \sqrt{1+a_3^2} \left(1+\bar{z}^2\right)^{-\frac{1}{2}},
\end{align*}
where the functions $k_i$ depend only on $(u,v)$. Solving for \eqref{Gauss}, we obtain furthermore that
\begin{align*}
\gamma_1&=\frac{-\tan(t)(k_3+ik_4)\cos(s-ik_1)+\left(\pard{k_1}{v}-i\pard{k_1}{u}\right)}{\rho},\\
a_1&=\frac{2^{\frac{2}{3}}e^{\frac{2}{3}k_2}}{3\cos(t)^2\left(\cos(2s)+\cosh(2k_1)\right)^2}\big((\cos(2s)+\cosh(2k_1))(\rho_1\pard{k_2}{v}+\rho_2\pard{k_2}{u})\\
&\qquad+\sin(2s)(\rho_1\pard{k_1}{u}-\rho_2\pard{k_1}{v})-\sinh(2k_1)(\rho_1\pard{k_1}{v}+\rho_2\pard{k_1}{u})\\
&\qquad -\tan(t) \sinh(2k_1) \left(\cos(s)\cosh(k_1)(k_4\rho_2-k_3\rho_1)+\sin(s)\sinh(k_1)(k_4\rho_1+k_3\rho_2)\right)\big),\\
b_1&=\frac{2^{\frac{2}{3}}e^{\frac{2}{3}k_2}}{3\cos(t)^2\left(\cos(2s)+\cosh(2k_1)\right)^2}\big((\cos(2s)+\cosh(2k_1))(\rho_2\pard{k_2}{v}-\rho_1\pard{k_2}{u})\\
&\qquad+\sin(2s)(\rho_2\pard{k_1}{u}+\rho_1\pard{k_1}{v})+\sinh(2k_1)(\rho_1\pard{k_1}{u}-\rho_2\pard{k_1}{v})\\
&\qquad - \sinh(2k_1)\tan(t) \left(\sin(s)\sinh(k_1)(k_4\rho_2-k_3\rho_1)-\cos(s)\cosh(k_1)(k_4\rho_1+k_3\rho_2)\right)\big).
\end{align*}
The other equations in \eqref{Gauss} impose restrictions on $\{k_1,k_2,k_3,k_4\}$ given by
\begin{equation}
\begin{split}
\pard{k_4}{u}-\pard{k_3}{v}&=2\tanh (k_1) \left(k_3 \pard{k_1}{v}-k_4 \pard{k_1}{u}\right),\\
\pard{k_4}{v}+\pard{k_3}{u}&=-2\coth(k_1) \left(k_3 \pard{k_1}{u}+k_4 \pard{k_1}{v}\right),\\
\Delta k_1&=e^{-\frac{2k_2}{3}} \frac{\sinh(2k_1)}{2} \left(-2^{\frac{4}{3}}+e^{\frac{2k_2}{3}}(k_3^2+k_4^2)\right),\\
\Delta k_2&=3*2^{\frac{1}{3}} e^{-\frac{2 k_2}{3}}\left(\cosh(2k_1)- e^{2k_2}\right).\label{cpk1}
\end{split}
\end{equation}
When $a_2=0$ and $b_2=\pm\sqrt{1+a_3^2}$, we find
\begin{align*}
a_6&=\frac{e^{k_1} \cos(\frac{5}{3} s)}{\cos(t)^2};\\
b_6&=\pm\frac{e^{k_1} \sin(\frac{5}{3} s)}{\cos(t)^2};\\
r&=\frac{e^{k_2}}{\cos(t)}.
\end{align*}
Furthermore, we obtain for $b_2=\sqrt{1+a_3^2}$ that
\begin{align*}
\gamma_{1+}&=\frac{-5 e^{k_1+\frac{5si}{3}}\tan(t)+e^{\frac{2k_1+k_2}{5}+\frac{2si}{3}}\left((\pard{k_2}{v}-3\pard{k_1}{v})-i(\pard{k_2}{u}-3\pard{k_1}{u})\right)}{5\cos(t)},\\
a_{1+}&=\frac{e^{k_1}\cos(\frac{5s}{3})\tan(t)+e^{\frac{2k_1+k_2}{5}}\left(\cos(\frac{2s}{3})\pard{k_2}{v}+\sin(\frac{2s}{3})\pard{k_2}{u}\right)}{3\cos(t)},\\
b_{1+}&=\frac{e^{k_1}\sin(\frac{5s}{3})\tan(t)-e^{\frac{2k_1+k_2}{5}}\left(\cos(\frac{2s}{3})\pard{k_2}{u}-\sin(\frac{2s}{3})\pard{k_2}{v}\right)}{3\cos(t)},\\
\end{align*}
and for $b_2=-\sqrt{1+a_3^2}$ we obtain
\begin{align*}
\gamma_{1-}&=\frac{-5 e^{k_1-\frac{5si}{3}}\tan(t)+e^{\frac{2k_1+k_2}{5}-\frac{2si}{3}}\left((3\pard{k_1}{v}-\pard{k_2}{v})-i(3\pard{k_1}{u}-\pard{k_2}{u})\right)}{5\cos(t)},\\
a_{1-}&=\frac{-e^{k_1}\cos(\frac{5s}{3})\tan(t)+e^{\frac{2k_1+k_2}{5}}\left(\cos(\frac{2s}{3})\pard{k_2}{v}-\sin(\frac{2s}{3})\pard{k_2}{u}\right)}{3\cos(t)},\\
b_{1-}&=\frac{e^{k_1}\sin(\frac{5s}{3})\tan(t)-e^{\frac{2k_1+k_2}{5}}\left(\cos(\frac{2s}{3})\pard{k_2}{u}+\sin(\frac{2s}{3})\pard{k_2}{v}\right)}{3\cos(t)}.\\
\end{align*}
Solving the last equations in \eqref{Gauss} implies restrictions on the functions $k_1(u,v)$ and $k_2(u,v)$ given by
\begin{equation}
\begin{split}
\Delta k_1&=2e^{-\frac{2(2k_1+k_2)}{5}}\left(3-e^{2k_1}-e^{2 k_2}\right)\\
\Delta k_2&=e^{-\frac{2(2k_1+k_2)}{5}}\left(8-e^{2k_1}-6 e^{2 k_2}\right).\label{cpk2}
\end{split}
\end{equation}
These equations are valid for both $b_2=\pm\sqrt{1+a_3^2}$. We summarize this in the following theorem.
\begin{theorem}
Each special Lagrangian submanifold of $\Cx P^4$ with $SO(2)\rtimes S_3$-symmetry where the only integral distribution containing $\mathcal{N}_1$ is the tangent bundle, can be constructed in the way above using either functions $\{k_1,k_2,k_3,k_4\}$ subject to \eqref{cpk1} or functions $\{k_1,k_2\}$ subject to \eqref{cpk2}. Conversely, each such a construction results in such a submanifold, unique up to local isometry.
\end{theorem}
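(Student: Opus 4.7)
The plan is to run the construction developed for $\Cx^4$ in the previous section verbatim, but now with $\epsilon=1$, and then verify that the full set of Gauss equations \eqref{gaua122}--\eqref{gaua41} reduces exactly to the two PDE systems \eqref{cpk1} and \eqref{cpk2}. Concretely, starting from \eqref{Conmat2}, we introduce the commuting frame $\{S,T,U,V\}$ with $T=X_4$, $S=\mu X_3$, $\mu=1/\sqrt{|\epsilon+a_3^2|}$, and $U+iV$ given by \eqref{coorvf}, exactly as before. The functions $f,g$ of Lemma on conjugate coordinates still produce coordinates $(s,t,u,v)$ on the submanifold, and the key point is that the arguments $\rho$, $\gamma_1$, $\gamma_2$ were derived without any use of $\epsilon=0$; only the explicit integrations of the ODEs \eqref{gaua43}, \eqref{gaua32}, \eqref{cod4r}, \eqref{cod3r} change.

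First I would handle the generic branch $z^2\neq -\tilde{\epsilon}$. Integrating \eqref{gaua43} with $\epsilon=1$ gives $a_3=\tan(t)$ (after a translation), and integrating the complex ODE $S(z)=\tilde{\epsilon}+z^2$ together with $T(z)=0$ gives the stated closed forms for $a_2,b_2,r$ in terms of $(s,t,k_1,k_2)$. The constant-of-integration condition $H=\rho^3 r(z^2+\tilde\epsilon)|\epsilon+a_3^2|=1$ determines $\rho$, and \eqref{gaua36}--\eqref{gaua46} force the stated expression for $a_6+ib_6$. Then \eqref{gaua122} produces $\gamma_1$ and \eqref{cod1r} produces $a_1,b_1$ explicitly. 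At this point every structure function is expressed in $(s,t,u,v)$ through $\{k_1,k_2,k_3,k_4\}$, and all Gauss equations involving only $(s,t)$-derivatives are automatically satisfied by construction; what remains is substituting into the residual equations \eqref{gaua126-}, \eqref{gaua121-}, \eqref{gaua121+}, \eqref{gaua131}, \eqref{gaua231} and \eqref{coor12g1}. A direct computation shows that all $(s,t)$-dependence drops out and the four resulting $(u,v)$-equations are precisely \eqref{cpk1}. The extra $k_3^2+k_4^2$ term (compared to \eqref{constraa}) is exactly the ambient curvature contribution in the $X_1$ vs.\ $X_2$ Gauss equation.

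Next I would treat the degenerate branch $z^2=-\tilde{\epsilon}$, which as in the flat case forces $a_2=0$, $b_2=\pm\sqrt{\epsilon+a_3^2}$, and reduces \eqref{Gauss} to the system \eqref{gauab16}--\eqref{gauab31}. Here $z$ is no longer suitable for constructing $g$, so one uses $w=(a_6+ib_6)/(\epsilon+a_3^2)$ instead, whose derivative along $S$ is $\pm\tfrac{5i}{3}w$. The same logarithmic integration determines $g$, and the constant $H=w^2(\epsilon+a_3^2)^2\rho^5 r$ pins down $\rho$. Integrating \eqref{gaua43} with $\epsilon=1$ now gives $a_3=\tan t$ and thus the formulas for $a_6,b_6,r$; \eqref{cod1r} combined with \eqref{gauab41} yields $a_1+ib_1$ directly (both sign choices $b_2=\pm\sqrt{1+a_3^2}$ must be handled, giving $\gamma_{1\pm}$, $a_{1\pm}$, $b_{1\pm}$); and \eqref{gauab16} determines $\gamma_1$. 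Substitution of all this data into \eqref{gauab121-} and \eqref{coor12g1} leaves only $(u,v)$-dependent content, which I expect to reduce to precisely \eqref{cpk2} for both sign choices.

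For the converse, given $\{k_1,k_2,k_3,k_4\}$ on a domain of $\Rl^2$ satisfying \eqref{cpk1} (respectively $\{k_1,k_2\}$ satisfying \eqref{cpk2}), I would use the formulas above to define $r$ and all connection coefficients on $\Rl^4$ with coordinates $(s,t,u,v)$, then define a $(1,2)$-tensor $A$ via \eqref{secondform} and check that $A$ is symmetric and $g$-symmetric, that $\tr A_X=0$, and that the pair $(g,A)$ satisfies \eqref{Gauss1} and \eqref{Codazzi1} with $\epsilon=1$; the existence theorem in the introduction then produces a (horizontal lift of a) special Lagrangian immersion into $\Cx P^4$, unique up to ambient isometry. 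The main obstacle throughout is the bookkeeping in the residual Gauss equations: unlike the $\Cx^4$ case, the factors $\cos(t)$ and $\tan(t)$ now interact nontrivially with the $s$-dependent trigonometric and hyperbolic expressions, and one must check that all $(s,t)$-dependence in the residual Gauss equations genuinely cancels, leaving only the $(u,v)$-equations \eqref{cpk1} and \eqref{cpk2}. Once this cancellation is verified, the theorem follows.
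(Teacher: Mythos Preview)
Your proposal is correct and follows essentially the same route as the paper: the paper does not give a separate proof for this theorem but instead carries out exactly the computation you describe---running the general construction with $\epsilon=1$, integrating the ODEs for $a_3$, $z$ (or $w$ in the degenerate branch), $r$, $\rho$, then reading off $\gamma_1$, $a_1$, $b_1$ from \eqref{gaua122} and \eqref{cod1r}, and finally checking that the residual Gauss equations collapse to \eqref{cpk1} or \eqref{cpk2}---and then states the theorem as a summary. Your identification of the extra $k_3^2+k_4^2$ term as the curvature contribution and your handling of the converse via the existence theorem are both in line with the paper's (implicit) reasoning.
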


\section{Submanifolds in $\Cx H^4$.}
\subsection{The case where $b_2=0$.}
This is the case where $\mathcal{N}_1$ is an integrable distribution. We assume that $a_3=0$. Similar to the case in $\Cx P^4$ we have that $M$ is a double warped product $\Rl\times_{e^g}\Rl\times_{e^f} N^2$. the function $a_2$ depends only on the coordinate $s$ and is given by
\begin{equation*}
\pard{a_2}{s}=a_2^2-1.
\end{equation*}
This equation has $3$ possible solutions, depending on the initial conditions. For $a_2(0)=1$, it is a constant. For $a_2(0)>1$ it is given as $a_2=-\coth(s)$. Finally for $a_2(0)<1$, it is given as $a_2(s)=-\tanh(s)$. The connection $\nabla$ is given by
\begin{align*}
\begin{matrix}
&\nabla_{X_1}X_1=a_1X_2+a_2X_3&\vline & \nabla_{X_1}X_2=-a_1X_1,\\
&\nabla_{X_2}X_1=b_1X_2 &\vline & \nabla_{X_2}X_2=-b_1X_1+a_2X_3,\\
&\nabla_{X_3}X_1=0 &\vline & \nabla_{X_3}X_2=0,\\
&\nabla_{X_4}X_1=0 &\vline & \nabla_{X_4}X_2=0,\\
&\nabla_{X_1}X_3=-a_2X_1 &\vline & \nabla_{X_1}X_4=0,\\
&\nabla_{X_2}X_3=-a_2X_2&\vline & \nabla_{X_2}X_4=0,\\
&\nabla_{X_3}X_3=0 &\vline & \nabla_{X_3}X_4=0,\\
&\nabla_{X_4}X_3=-\frac{X_4}{a_2} &\vline & \nabla_{X_4}X_4=\frac{X_3}{a_2}.\\
\end{matrix}
\end{align*}
We have the following result.
\begin{theorem}
Suppose $M$ is a special Lagrangian submanifold in $\Cx H^4$ having a local $SO(2)\rtimes S_3$-symmetry. Suppose $\mathcal{N}_1$ is integrable. Then $M$ can be lifted horizontally to a submanifold in $H^9$ through $F$ and is locally isometric to either
\begin{equation}
F(t,s,u,v)=\left(\sin(t)\sinh(s),\cos(t)\sinh(s),\phi(u,v)\cosh(s)\right),\label{ch1}
\end{equation}
where $\phi$ is the horizontal lift of a special Lagrangian submanifold of $\Cx H^2$ to $H^5$ in case $a_2^2<1$, or
\begin{equation}
F(t,s,u,v)=\left(\phi(u,v)\sinh(s),\cos(t)\cosh(s),\sin(t)\cosh(s)\right),\label{ch12}
\end{equation}
where $\phi$ is the horizontal lift of a special Lagrangian submanifold of $\Cx P^2$ to $S^5$ in case $a_2^2>1$, or
\begin{equation}
F(t,s,u,v)=\Big(\left(\phi(u,v),t\right)e^{-s},-\frac{1}{2} e^{-s},\left(\norm{\left(\phi(u,v),t\right)}^2+if(u,v)\right)e^{-s}+e^s\Big),\label{ch2}
\end{equation}
where $\phi$ is a special Lagrangian surface in $\Cx^2$ and $f$ is the integral of the differential form
\begin{equation*}
2\sum_{i=1}^2 \left(x^i \dv y^i-y^i \dv x^i\right)
\end{equation*}
on $\Cx^2$ in case $a_2^2=1$.
\end{theorem}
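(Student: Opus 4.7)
\emph{Setting up the product structure.} The displayed connection table shows that $\spa\{X_4\}$ with $\spa\{X_1,X_2,X_3\}$, and $\spa\{X_3\}$ with $\mathcal{N}_1$, both satisfy the hypotheses of the warped product theorem from the preliminaries. Iterated application yields the double warped product $M \cong \Rl\times_{e^g}(\Rl\times_{e^f} N^2)$ with warping functions depending only on the $s$-coordinate dual to $X_3$, satisfying $\partial_s g = 1/a_2$ and $\partial_s f = -a_2$. The Codazzi equation \eqref{gaua32}, specialised to $\epsilon = -1$ and $a_3 = b_2 = 0$, reduces to $\partial_s a_2 = a_2^2 - 1$, whose three solution branches $a_2 = -\tanh(s)$, $a_2 = -\coth(s)$, and $a_2 \equiv \pm 1$ correspond exactly to $|a_2(0)|$ being less than, greater than, or equal to $1$.

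\emph{The two non-degenerate branches.} For these I would reconstruct the horizontal lift $F \colon M \to H^9 \subset \Cx^5_1$ by integrating the Gauss identity \eqref{Gausshyp}. Taking $X_3 = \partial_s$ and choosing $\mu$ with $\mu X_4 = \partial_t$ (the commutation $[X_3, \mu X_4] = 0$ forces $\mu' = -\mu/a_2$, giving $\mu = \sinh(s)$ or $\cosh(s)$ respectively), the identity $D_{X_3}X_3 = F$ produces $\partial_s^2 F = F$, whence $F = A\cosh(s) + B\sinh(s)$ with $A, B$ depending on the remaining coordinates. The mixed identity $D_{X_3}(F_*X_4) = 0$ forces exactly one of $A, B$ to be $t$-independent, and then $D_{X_4}(F_*X_4) = (1/a_2)F_*X_3 + F$ produces a scalar second-order ODE in $t$ for the other factor, yielding the circular or hyperbolic $t$-dependence visible in \eqref{ch1}/\eqref{ch12}. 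Finally the transverse identities $D_{X_i}(F_*X_j)$ for $i, j \in \{1,2\}$, via the same reasoning as in \eqref{refren}, identify the $(u,v)$-dependent factor with the horizontal lift of a special Lagrangian surface in $\Cx H^2 \to H^5$ or $\Cx P^2 \to S^5$; fixing the remaining constant vectors by an ambient isometry yields the stated normal forms.

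\emph{The degenerate case.} The principal obstacle is the branch $a_2 = \pm 1$, where $\mu = e^{\mp s}$, the $t$-orbits are horocycles rather than geodesic circles in $H^9$, and the splitting $F = A\cosh(s) + B\sinh(s)$ is no longer aligned with the natural geometry. I would instead work in the lightlike basis and seek $F$ in the form $F = P(t,u,v)e^{-s} + Q\,e^s$; this still satisfies $\partial_s^2 F = F$ and is adapted to the horocyclic structure at the boundary at infinity. The constraint $\scal{F,F}_1 = -1$ combined with $D_{X_4}(F_*X_4)$ forces $Q$ to be a constant null vector, $\scal{P,Q}_1 = -\tfrac{1}{2}$, and $P$ affine in $t$, which is exactly what pins down the specific ``$-\tfrac{1}{2}e^{-s}$'' slot and the quadratic $\norm{(\phi,t)}^2$ term in \eqref{ch2}. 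The transverse identities $D_{X_i}(F_*X_j)$ for $i,j\in\{1,2\}$ then identify the $(u,v)$-part of $P$ with a special Lagrangian surface $\phi\colon\Cx\to\Cx^2$. The imaginary correction $if(u,v)$ in the final entry is not free: the remaining mixed Codazzi identities force $\dv f = 2\sum_{i=1}^2(x^i\,\dv y^i - y^i\,\dv x^i)$ pulled back along $\phi$, and it is precisely the special Lagrangian condition on $\phi$ that makes this $1$-form closed, guaranteeing the existence of $f$. The main subtlety throughout this case is the null-frame bookkeeping in $\Cx^5_1$ and recognising the closedness of the defining $1$-form of $f$ as an immediate consequence of the special Lagrangian hypothesis on $\phi$.
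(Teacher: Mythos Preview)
Your approach is the same as the paper's: the double warped product, the ODE $\partial_s a_2 = a_2^2 - 1$ with its three branches, and the reconstruction of $F$ from $\partial_s^2 F = F$ via the Gauss identity \eqref{Gausshyp} all match. Two small slips in the degenerate case are worth fixing. First, with $a_2=1$ and $\partial_t = e^{-s}X_4$ one gets
\[
D_{\partial_t}\partial_t \;=\; e^{-2s}\bigl(\partial_s F + F\bigr) \;=\; 2Q\,e^{-s},
\]
so $\partial_t^2 P = 2Q$ and $P$ is \emph{quadratic} in $t$ with leading coefficient the constant null vector $Q$, not affine; this quadratic term is exactly what produces the $t^2$ inside $\norm{(\phi,t)}^2$ in \eqref{ch2}. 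Second, the equation $\dv f = 2\sum_i (x^i\,\dv y^i - y^i\,\dv x^i)$ does not come from Codazzi but from the horizontality of the lift (the condition that $iF$ is normal to $M$ in $\Cx^5_1$), and its closedness along $\phi$ follows already from the \emph{Lagrangian} condition on $\phi$, since the exterior derivative of that one-form is a multiple of the K\"ahler form on $\Cx^2$; minimality is used only later, in identifying the second fundamental form.
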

\begin{proof}
We can check similar to the case in $\Cx P^4$ that $M=\Rl\times_{e^g}\Rl\times_{e^f}\times N^2$, where $f$ and $g$ are functions on the first factor, determined by $X_3(g)=-\frac{1}{a_2}$ and $X_3(f)=-a_2$. We can treat the cases separately for each solution to $a_2(s)$.

Assume $a_2=-\tanh(s)$, then it is easy to see that $\pard{}{t}=-\sinh(s) X_4$ commutes with $\pard{}{s}$. The Gauss identity now implies for $i \in \{1,2\}$ that
\begin{align*}
D_{\pard{}{s}}\pard{}{s}&=\pardd{F}{s}=F\\
          &\Rightarrow F=A\sinh(s)+B\cosh(s),\\
D_{\pard{}{T}}\pard{}{s}&=\pards{F}{t}{s}=\pard{A}{t}\cosh(s)+\pard{B}{t}\sinh(s)\\
      &=\coth(s)\pard{F}{t}=\pard{A}{t}\cosh(s)+\pard{B}{t}\frac{\cosh(s)^2}{\sinh(s)}\\
      &\Rightarrow \pard{B}{t}=0,\\
D_{X_i}\pard{}{s}&=\pard{F_*X_i}{s}=A_*X_i\cosh(s)+B_*X_i\sinh(s)\\
          &=\tanh(s) X_i=A_*X_i \frac{\sinh(s)^2}{\cosh(s)}+B_*X_i\sinh(s)\\
          &\Rightarrow A_*X_i=0.
\end{align*}
Using the fact that $\scal{F,F}_1=-1$, we get that $\scal{B,B}_1=-\scal{A,A}_1=-1$ and $\scal{A,B}_1=0$. Furthermore, we find
\begin{equation*}
\begin{split}
D_{\pard{}{t}}\pard{}{t}&=\pardd{F}{t}=\pardd{A}{t}\sinh(s)\\
    &=-\cosh(s)\sinh(s)\pard{F}{s}+\sinh(s)^2F=-A \sinh(s)\\
    &\Rightarrow A=A_1\cos(t)+A_2\sin(t).
    \end{split}
\end{equation*}
Because $A$ has unit length, so do $A_1$ and $A_2$ and they are both orthogonal. Calculations similar to \eqref{refren} show that $B$ can be taken as the horizontal lift of any special Lagrangian submanifold in $\Cx H^2$ and applying a suitable isometry gives \eqref{ch1}.\\
For $a_2=-\coth(s)$ calculations similar to the previous case result in \eqref{ch12}.\\
Finally we assume $a_2=1$. Then the vector field given by $\pard{}{t}=e^{-s} X_4$ commutes with $\pard{}{s}$. We can calculate for $i\in \{1,2\}$ that
\begin{align*}
D_{\pard{}{s}}\pard{}{s}&=\pardd{F}{s}=F \Rightarrow F=A e^{s}+B e^{-s},\\
D_{\pard{}{t}}\pard{}{s}&=\pards{F}{t}{s}=\pard{A}{t} e^s-\pard{B}{t} e^{-s}\\
      &=-\pard{F}{t}=-\pard{A}{t} e^s-\pard{B}{t} e^{-s}\\
      &\Rightarrow \pard{A}{t}=0,\\
D_{X_i}\pard{}{s}&=\pard{F_*X_i}{s}=A_*X_i e^s-B_*X_i e^{-s}\\
          &=-F_*X_i=-\left(A_*X_i e^s+B_*X_i e^{-s}\right)\\
          &\Rightarrow A_*=0.
\end{align*}
Using the fact that $\scal{F,F}_1=-1$, we obtain that $A$ and $B$ are vector fields with $0$ length and they satisfy $\scal{A,B}_1=-\frac{1}{2}$. Further calculations show
\begin{align*}
D_{\pard{}{t}}\pard{}{t}&=\pardd{F}{t}=e^{-s} \pardd{B}{t}\\
    &= e^{-2s}\left(\pard{F}{s}+F\right)=2A e^{-s}\\
    &\Rightarrow B=A t^2+B_1t+B_2,\\
D_{X_i} \pard{}{t}&=\pard{F_*X_i}{t}=B_{1*}X_i e^{-s}=0\\
         &\Rightarrow B_{1*}=0.
\end{align*}
We can conclude that $F$ has the form
\begin{equation*}
F=\left(At^2+B_1t+\phi\right)e^{-s}+A e^s
\end{equation*}
Here, $\phi$ is an immersion of a $2$-fold in $\Cx^5_1$ tangent to $\mathcal{N}_1$. Calculating the scalar products of $B$ and $A$, we get
\begin{equation}
\begin{split}
\scal{A,B}_1&=t\scal{A,B_1}_1+\scal{A,\phi}_1=-\frac{1}{2}\\
            &\Rightarrow \scal{A,B_1}_1=0\bigwedge\scal{A,\phi}_1=-\frac{1}{2},\\
\scal{B,B}_1&=t^2\left(\scal{B_1,B_1}_1-1\right)+2t\scal{B_1,\phi}_1+\scal{\phi,\phi}_1=0\\
            &\Rightarrow \scal{B_1,B_1}_1=1\bigwedge\scal{B_1,\phi}_1=0\bigwedge\scal{\phi,\phi}_1=0.\label{scala}
\end{split}
\end{equation}
We can shift to a different standard basis of $\Cx^5_1$ such that
\begin{equation*}
\scal{\vec{z},\vec{w}}=\Re\left(\sum_{j=1}^3 z_j \bar{w}_j+ z_4\bar{w}_5+z_5\bar{w}_4\right).
\end{equation*}
In this case the constant light-like vector $A$ and time-like $B_1$, after applying a suitable isometry, can be chosen to be
\begin{align*}
A&=\left(0,0,0,0,1\right),\\
B_1&=\left(0,0,1,0,0\right).
\end{align*}
We can write $\phi=(\phi_1,\phi_2,\phi_3,\phi_4,\phi_5)$ where $\phi_j=x_j+iy_j$. Then \eqref{scala} implies
\begin{align*}
x_4&=-\frac{1}{2},\\
x_3&=0,\\
x_5-2y_4 y_5&=\sum_{j=1}^3\abs{\phi_j}^2.
\end{align*}
We can use the fact that both $F$ and $iF$ are orthogonal to the tangent space in $\Cx^5_1$ and this results in
\begin{align*}
\phi_4&=-\frac{1}{2},\\
\phi_3&=0,\\
\dv y_5&=2\sum_{i=1}^2(x_i \dv y_i-y_i \dv x_i).
\end{align*}
This last equation is integrable if and only its derivative equals $0$ along the submanifold. But this derivative is nothing more than a multiple of the K\"ahler form on $\Cx^2$ spanned by the first $2$ complex coordinates. In other words, for such a submanifold to exist, the projection of $\phi$ onto the first 2 coordinates should be a Lagrangian submanifold in $\Cx^2$. Calculating the Gauss identity on $D_{X_i}X_j$ we find that the metric on this immersion is given by
\begin{equation*}
\scal{\phi_*X_i,\phi_*X_j}=e^{2s} \delta_{ij},
\end{equation*}
where $\scal{a,b}$ is the standard scalar product on $\Cx^2$ and $\phi$ here is the restriction to the first $2$ complex coordinates. Because $\scal{F_*X_i,F_*X_j}=\delta_{ij}$ and because $\phi_{3*}=0$ and $\phi_{4*}=0$ this condition is included in the warped product structure. Using calculations like \eqref{refren} we conclude that $(\phi_1,\phi_2)$ can be any special Lagrangian $2$-fold in $\Cx^2$. The result is summarized in \eqref{ch2}.  \end{proof}

\subsection{The case where $b_2 \neq 0$.}
First we assume that $\mathcal{N}_+$ is an integrable distribution. This is equivalent to $a_6+ib_6=0$. The connection is given by \eqref{Conmat2}, resulting in a warped product structure $\Rl\times_{e^f}N^3 $. The equation
\begin{equation*}
X_4(a_3)=\pard{a_3}{t}=a_3^2-1
\end{equation*}
has a solution given as $\abs{a_3}=1$, $a_3=-\tanh(t)$ or $a_3=-\coth(t)$, depending on the initial value of $a_3$. Using an analysis similar to the case of $\Cx P^4$ and the case above gives the following result.
\begin{theorem}
Suppose $M$ is a special Lagrangian submanifold in $\Cx H^4$ having a local $SO(2)\rtimes S_3$-symmetry. Suppose $\mathcal{N}_1$ is non-integrable, but is contained in the integrable $\mathcal{N}_+$ distribution. Then $M$ can be lifted horizontally to a submanifold in $H^9$ through $F$ and is locally isometric to either
\begin{equation}
F(t,s,u,v)=\left(\sinh(t),\phi(s,u,v)\cosh(t)\right),\label{ch3}
\end{equation}
where $\phi$ is the horizontal lift of a special Lagrangian submanifold of $\Cx H^3$ with a local $S_3$-symmetry to $H^7$ in case $a_3^2<1$, or
\begin{equation}
F(t,s,u,v)=\left(\phi(s,u,v)\sinh(t),\cosh(t)\right),\label{ch32}
\end{equation}
where $\phi$ is the horizontal lift of a special Lagrangian submanifold of $\Cx P^3$ with a local $S_3$-symmetry to $S^7$ in case $a_3^2>1$, or
\begin{equation}
F(t,s,u,v)=\big(\phi(s,u,v)e^{-t},-e^{-t}/2,\left(\norm{\phi(s,u,v)}^2+if(s,u,v)\right)e^{-t}+e^t\big),\label{ch4}
\end{equation}
where $\phi$ is a special Lagrangian submanifold with a local $S_3$-symmetry  in $\Cx^3$ and $f$ is the integral of the differential form
\begin{equation*}
2\sum_{i=1}^3 \left(x^i \dv y^i-y^i \dv x^i\right).
\end{equation*}
\end{theorem}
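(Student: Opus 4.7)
The plan is to mirror the strategy of the preceding $\Cx P^4$ and $\Cx H^4$ theorems, replacing only the ODE for $a_3$ (which now involves $+1$ versus $-1$ curvature) and the Gauss identity \eqref{Gausshyp} in place of \eqref{Gaussproj}. First I would observe that since $a_6+ib_6=0$, the connection is given by \eqref{Conmat2} with $a_6=b_6=0$, so that $\nabla_{X_i}X_4 = -a_3 X_i$ for $i\in\{1,2,3\}$ while $\mathcal{N}_+$ is $\nabla$-parallel along itself. Hiepko's theorem then yields a warped product $\Rl\times_{e^f} N^3$ with $X_4(f)=-a_3$, and the Codazzi equation \eqref{gaua43} reduces to $\partial a_3/\partial t = a_3^2-1$ along the $X_4$-integral curve, whose solutions are exactly $a_3=-\tanh t$, $a_3=-\coth t$, and $|a_3|=1$.

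Next, for each branch I would work in the horizontal lift $F$ to $H^9\subset\Cx^5_1$ and use \eqref{Gausshyp}. For $a_3=-\tanh t$, the Gauss identity gives $\partial_t^2 F = F$, so $F=A\sinh t+B\cosh t$, and the mixed derivatives $D_{X_i}X_4$ and $D_{X_4}X_4$ force $\partial B/\partial t=0$ and $A_*X_i=0$ for $i\in\{1,2,3\}$; Lorentzian normalization $\scal{F,F}_1=-1$ and integrating $D_{\partial_t}\partial_t$ along another direction yield $A=A_1\cos t+A_2\sin t$ with $A_1,A_2$ constant and orthonormal, while $B$ is the horizontal lift of the $3$-fold $N^3$ to $H^7$ with a manifestly $S_3$-symmetric second fundamental form (by a calculation modelled on \eqref{refren}), producing \eqref{ch3}. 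The case $a_3=-\coth t$ is entirely analogous, except that the roles of $\sinh t$ and $\cosh t$ swap and the ambient slice becomes $S^7\subset\Cx^4$; this gives \eqref{ch32}.

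The genuinely different case is $|a_3|=1$, which I expect to be the main obstacle because the solution of $\partial_t^2 F = F$ in null coordinates $e^{\pm t}$ is degenerate: one finds $F = A e^t + B e^{-t}$ with $A$ lightlike and constant along $X_1,X_2,X_3$. Writing $B=At^2+B_1 t+\phi$ as in the parallel situation for $\Cx^2$ inside \eqref{ch2}, I would argue by exactly the same steps, that $B_1$ is constant and $\phi$ is an immersion of the three-dimensional leaf tangent to $\mathcal{N}_+$. The Lorentz inner products $\scal{A,B}_1=-1/2$, $\scal{B,B}_1=0$, $\scal{A,A}_1=0$ then decouple into $\scal{A,\phi}_1=-1/2$, $\scal{A,B_1}_1=0$, $\scal{B_1,B_1}_1=1$, $\scal{B_1,\phi}_1=0$, $\scal{\phi,\phi}_1=0$. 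Fixing a null frame so that $A=(0,0,0,0,1)$ and $B_1=(0,0,1,0,0)$, I would read off the real parts of $\phi_4,\phi_3,\phi_5$ and then impose that both $F$ and $iF$ are normal to the tangent space to obtain $\phi_4=-1/2$, $\phi_3=0$ and the integrability condition $\dv y_5 = 2\sum_{i=1}^3(x_i\, \dv y_i - y_i\, \dv x_i)$.

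Finally, closedness of that $1$-form is the Kähler form of the first three complex slots pulled back to the leaf, so integrability is equivalent to the projection of $\phi$ to $\Cx^3$ being Lagrangian; computing $D_{X_i}X_j$ via \eqref{Gausshyp} and inserting the warping factor $e^{2t}$ shows that the induced metric and cubic form on that projection are exactly those of a special Lagrangian $3$-fold with $S_3$-symmetry, and an application of \eqref{refren}-type matching recovers the $A$ and $C$ tensors pointwise. Collecting the three pieces produces \eqref{ch3}, \eqref{ch32} and \eqref{ch4} respectively, and conversely any such $F$ is readily checked to be horizontal, minimal and $C$-totally real, so that it descends to a special Lagrangian submanifold of $\Cx H^4$ with the prescribed symmetry.
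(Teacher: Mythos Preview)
Your overall strategy is correct: split off the $X_4$-direction via the warped product $\Rl\times_{e^f}N^3$, solve $\partial_t a_3=a_3^2-1$, and identify $N^3$ with an $S_3$-symmetric special Lagrangian in the appropriate model. This is exactly what the paper intends when it says the proof is ``similar to the case of $\Cx P^4$ and the case above''. However, you have conflated the present \emph{single} warped product with the \emph{double} warped product of the preceding $b_2=0$ theorem, and this introduces spurious steps that do not match \eqref{ch3}--\eqref{ch4}.

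In the case $a_3=-\tanh t$ you claim that ``integrating $D_{\partial_t}\partial_t$ along another direction yields $A=A_1\cos t+A_2\sin t$''. There is no second direction here: once you have shown $A_*X_i=0$ for $i=1,2,3$, the vector $A$ is a single constant spacelike unit vector, and after an isometry one obtains $F=(\sinh t,\,\phi\cosh t)$ with $\phi=B$ the lift of $N^3$ to $H^7$, which is \eqref{ch3}. The trigonometric splitting you import belongs to the $b_2=0$ theorem, where one first integrates along $X_3$ and \emph{then} along $X_4$; in the present theorem only $X_4$ is split off and the remaining factor is three-dimensional.

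The same confusion appears in your treatment of $|a_3|=1$. There is no decomposition $B=At^2+B_1t+\phi$, because $t$ is already the unique warping coordinate and $B$ does not depend on it; consequently there is no constant vector $B_1$ to normalise to $(0,0,1,0,0)$. In \eqref{ch4} one has directly $F=Ae^t+Be^{-t}$ with $A=(0,0,0,0,1)$ constant lightlike and $B=(\phi,-\tfrac12,\lVert\phi\rVert^2+if)$ an immersion of $N^3$; the constraints $\scal{A,A}_1=\scal{B,B}_1=0$, $\scal{A,B}_1=-\tfrac12$ together with the C-totally real condition then force $B_4=-\tfrac12$ and the integrability condition $\dv y_5=2\sum_{i=1}^3(x_i\,\dv y_i-y_i\,\dv x_i)$. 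If you remove the extra quadratic-in-$t$ layer and argue directly on $B$, your remaining outline (Lagrangian projection to $\Cx^3$, \eqref{refren}-type matching of the induced data) is correct and completes the proof.
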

Finally we assume that there is no integrable distribution that contains $\mathcal{N}_1$ except for the tangent bundle. We return to the analysis as done for $\Cx^4$, but set $\epsilon=-1$. The result will depend on the initial value of $a_3$. First assume that $a_3^2<1$, then $\tilde{\epsilon}=-1$. We find functions $\{k_1,k_2,k_3,k_4\}$ of $(u,v)$ such that
\begin{align*}
a_3&=-\tanh(t),\\
a_2&=-\frac{\sinh(2s)}{\cosh(t) \left(\cosh(2s)+\cos(2 k_1)\right)},\\
b_2&=-\frac{\sin(2k_1)}{\cosh(t) \left(\cosh(2s)+\cos(2 k_1)\right)},\\
r&=\frac{e^{k_2}}{\cosh(t)\sqrt{\cosh(2s)+\cos(2k_1)}},\\
a_6+ib_6&=\frac{k_3+ik_4}{\rho} \sqrt{1-a_3^2}\left(1-\bar{z}^2\right)^{-\frac{1}{2}}.\\
\end{align*}
Using \eqref{Gauss} as earlier, we obtain $a_1$, $b_1$, $\gamma_1$ as
\begin{align*}
\gamma_1&=\frac{-\tanh(t)(k_3+ik_4)\cosh(s-ik_1)+\left(\pard{k_1}{v}-i\pard{k_1}{u}\right)}{\rho},\\
a_1&=\frac{2^{\frac{2}{3}}e^{\frac{2}{3}k_2}}{3\cosh(t)^2\left(\cosh(2s)+\cos(2k_1)\right)^2}\big((\cosh(2s)+\cos(2k_1))(\rho_1\pard{k_2}{v}+\rho_2\pard{k_2}{u})\\
&\qquad+\sinh(2s)(\rho_2\pard{k_1}{v}-\rho_1\pard{k_1}{u})+\sin(2k_1)(\rho_1\pard{k_1}{v}+\rho_2\pard{k_1}{u})\\
&\qquad + \sin(2k_1)\tanh(t) \left(\cosh(s)\cos(k_1)(k_4\rho_2-k_3\rho_1)-\sinh(s)\sin(k_1)(k_4\rho_1+k_3\rho_2)\right)\big),\\
b_1&=\frac{2^{\frac{2}{3}}e^{\frac{2}{3}k_2}}{3\cosh(t)^2\left(\cosh(2s)+\cos(2k_1)\right)^2}\big((\cosh(2s)+\cos(2k_1))(\rho_2\pard{k_2}{v}-\rho_1\pard{k_2}{u})\\
&\qquad-\sinh(2s)(\rho_2\pard{k_1}{u}+\rho_1\pard{k_1}{v})-\sin(2k_1)(\rho_1\pard{k_1}{u}-\rho_2\pard{k_1}{v})\big)\\
&\qquad - \sin(2k_1)\tanh(t) \left(\sinh(s)\sin(k_1)(k_4\rho_2-k_3\rho_1)+\cosh(s)\cos(k_1)(k_4\rho_1+k_3\rho_2)\right)\big).
\end{align*}
The other equations in \eqref{Gauss} put restrictions on $\{k_1,k_2,k_3,k_4\}$ given by
\begin{equation}
\begin{split}
\pard{k_4}{u}-\pard{k_3}{v}&=2\tan(k_1)\left(k_4\pard{k_1}{u}-k_3\pard{k_1}{v}\right),\\
\pard{k_4}{v}+\pard{k_3}{u}&=-2\cot(k_1)\left(k_3\pard{k_1}{u}+k_4\pard{k_1}{v}\right),\\
\Delta k_1&=\frac{\sin(2 k_1)}{2} \left(2^{\frac{4}{3}}e^{-\frac{2}{3} k_2}+k_3^2+k_4^2\right),\\
\Delta k_2&=-3*2^{\frac{1}{3}} e^{-\frac{2}{3} k_2}\left(e^{2 k_2}+\cos(2k_1)\right).\label{kh}
\end{split}
\end{equation}
Then we set $a_3^2>1$ and assume $z^2\neq -1$. We then find
\begin{align*}
a_3&=-\coth(t),\\
a_2&=\frac{\sin(2s)}{\sinh(t) \left(\cos(2s)+\cosh(2 k_1)\right)},\\
b_2&=\frac{\sinh(2k_1)}{\sinh(t) \left(\cos(2s)+\cosh(2 k_1)\right)},\\
r&=\frac{e^{k_2}}{\sinh(t)\sqrt{\cos(2s)+\cosh(2k_1)}},\\
a_6+ib_6&=\frac{k_3+ik_4}{\rho} \sqrt{a_3^2-1}\left(1+\bar{z}^2\right)^{-\frac{1}{2}}.\\
\end{align*}
We obtain
\begin{align*}
\gamma_1&=\frac{\coth(t)(k_3+ik_4)\cos(s-ik_1)+\left(\pard{k_1}{v}-i\pard{k_1}{u}\right)}{\rho},\\
a_1&=\frac{2^{\frac{2}{3}}e^{\frac{2}{3}k_2}}{3\sinh(t)^2\left(\cos(2s)+\cosh(2k_1)\right)^2}\big((\cos(2s)+\cosh(2k_1))(\rho_1\pard{k_2}{v}+\rho_2\pard{k_2}{u})\\
&\qquad+\sin(2s)(\rho_1\pard{k_1}{u}-\rho_2\pard{k_1}{v})-\sinh(2k_1)(\rho_1\pard{k_1}{v}+\rho_2\pard{k_1}{u})\\
&\qquad + \sinh(2k_1)\coth(t) \left(\cos(s)\cosh(k_1)(k_4\rho_2-k_3\rho_1)+\sin(s)\sinh(k_1)(k_4\rho_1+k_3\rho_2)\right)\big),\\
b_1&=\frac{2^{\frac{2}{3}}e^{\frac{2}{3}k_2}}{3\sinh(t)^2\left(\cosh(2s)+\cos(2k_1)\right)^2}\big((\cosh(2s)+\cos(2k_1))(\rho_2\pard{k_2}{v}-\rho_1\pard{k_2}{u})\\
&\qquad+\sin(2s)(\rho_2\pard{k_1}{u}+\rho_1\pard{k_1}{v})+\sinh(2k_1)(\rho_1\pard{k_1}{u}-\rho_2\pard{k_1}{v})\big)\\
&\qquad - \sinh(2k_1)\coth(t) \left(\sin(s)\sinh(k_1)(k_3\rho_1-k_4\rho_2)+\cos(s)\cosh(k_1)(k_4\rho_1+k_3\rho_2)\right)\big).
\end{align*}
The functions $\{k_1,k_2,k_3,k_4\}$ have to satisfy
\begin{equation}
\begin{split}
\pard{k_4}{u}-\pard{k_3}{v}&=2\tanh(k_1)\left(k_3\pard{k_1}{v}-k_4\pard{k_1}{u}\right),\\
\pard{k_4}{v}+\pard{k_3}{u}&=-2\coth(k_1)\left(k_3\pard{k_1}{u}+k_4\pard{k_1}{v}\right),\\
\Delta k_1&=-\sinh(2 k_1)\left(2^{\frac{1}{3}}e^{-\frac{2}{3} k_2}+\frac{k_3^2+k_4^2}{2}\right),\\
\Delta k_2&=3*2^{\frac{1}{3}} e^{-\frac{2}{3} k_2}\left(\cosh(2k_1)-e^{2 k_2}\right).\label{kh2}
\end{split}
\end{equation}
Finally, assume $a_3^2>1$ and $z^2=-1$. Then we find
\begin{align*}
a_3&=-\coth(t),\\
a_6&=\frac{e^{k_1}\cos(\frac{5}{3} s)}{\sinh(t)^2},\\
b_6&=\pm\frac{e^{k_1}\sin(\frac{5}{3} s)}{\sinh(t)^2},\\
r&=\frac{e^{k_2}}{\sinh(t)}.
\end{align*}
We obtain for $b_2=\sqrt{a_3^2-1}$ that
\begin{align*}
\gamma_{1+}&=\frac{5 e^{k_1+\frac{5si}{3}}\coth(t)+e^{\frac{2k_1+k_2}{5}+\frac{2si}{3}}\left((\pard{k_2}{v}-3\pard{k_1}{v})-i(\pard{k_2}{u}-3\pard{k_1}{u})\right)}{5\sinh(t)},\\
a_{1+}&=\frac{-e^{k_1}\cos(\frac{5s}{3})\coth(t)+e^{\frac{2k_1+k_2}{5}}\left(\cos(\frac{2s}{3})\pard{k_2}{v}+\sin(\frac{2s}{3})\pard{k_2}{u}\right)}{3\sinh(t)},\\
b_{1+}&=\frac{-e^{k_1}\sin(\frac{5s}{3})\coth(t)-e^{\frac{2k_1+k_2}{5}}\left(\cos(\frac{2s}{3})\pard{k_2}{u}-\sin(\frac{2s}{3})\pard{k_2}{v}\right)}{3\sinh(t)},\\
\end{align*}
and for $b_2=-\sqrt{a_3^2-1}$ we obtain
\begin{align*}
\gamma_{1-}&=\frac{5 e^{k_1-\frac{5si}{3}}\coth(t)+e^{\frac{2k_1+k_2}{5}-\frac{2si}{3}}\left((3\pard{k_1}{v}-\pard{k_2}{v})-i(3\pard{k_1}{u}-\pard{k_2}{u})\right)}{5\sinh(t)},\\
a_{1-}&=\frac{e^{k_1}\cos(\frac{5s}{3})\coth(t)+e^{\frac{2k_1+k_2}{5}}\left(\cos(\frac{2s}{3})\pard{k_2}{v}-\sin(\frac{2s}{3})\pard{k_2}{u}\right)}{3\sinh(t)},\\
b_{1-}&=\frac{-e^{k_1}\sin(\frac{5s}{3})\coth(t)-e^{\frac{2k_1+k_2}{5}}\left(\cos(\frac{2s}{3})\pard{k_2}{u}+\sin(\frac{2s}{3})\pard{k_2}{v}\right)}{3\sinh(t)}.\\
\end{align*}
 Solving the other Gauss equations results in the relations
\begin{equation}
\begin{split}
\Delta k_1&=e^{-\frac{2}{5}(2k_1+k_2)}\left(6+2e^{2k_1}-2e^{2k_2}\right),\\
\Delta k_2&=e^{-\frac{2}{5}(2k_1+k_2)}\left(8 + e^{2k_1} -6e^{2k_2}\right).\label{kh3}
\end{split}
\end{equation}
These equations are valid for both $b_2=\pm\sqrt{a_3^2-1}$. We can conclude with the following proposition.
\begin{theorem}
Each special Lagrangian submanifold of $\Cx H^4$ with $SO(2)\rtimes S_3$-symmetry where the only integral distribution containing  $\mathcal{N}_1$ is the whole tangent bundle, can be constructed in the way above using functions $\{k_1,k_2,k_3,k_4\}$ subject to \eqref{kh} in case $a_3^2(0)<1$, subject to \eqref{kh2} in case $a_3^2(0)>1$, or functions $\{k_1,k_2\}$ subject to \eqref{kh3} when $a_3^2(0)>1$ and $z^2=-1$. Conversely, each such a construction results in such a submanifold, unique up to local isometry.
\end{theorem}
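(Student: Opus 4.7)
The plan is to follow exactly the scheme that was set up in the $\Cx^4$ analysis and then adapted to $\Cx P^4$, but now specialize the parameter to $\epsilon=-1$. The key technical input is the coordinate construction \eqref{coorvf}: we set $T=X_4$, $S=\mu X_3$ with $\mu=1/\sqrt{|\epsilon+a_3^2|}$, and look for commuting vector fields $U,V$ of the form \eqref{coorvf}. The compatibility conditions \eqref{coor4r}--\eqref{coor12g2} are independent of $\epsilon$, and the auxiliary functions $f,g,\rho$ exist for exactly the same reasons as before; no new conceptual ingredient is needed.

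Next I would split into cases according to the sign of $\tilde\epsilon=\sign(\epsilon+a_3^2)=\sign(a_3^2-1)$. First, in the subcase $a_3^2(0)<1$ we have $\tilde\epsilon=-1$ and $\mu=1/\sqrt{1-a_3^2}$; solving \eqref{gaua43} with $\epsilon=-1$ gives $a_3=-\tanh t$, and the function $z=\mu(a_2+ib_2)$ satisfies $S(z)=-1+z^2$, which I would integrate to obtain the expressions for $a_2$ and $b_2$ displayed in the text (in this regime $\cosh(2s)+\cos(2k_1)$ appears naturally instead of the $\cos(2s)+\cosh(2k_1)$ of the $\Cx^4$ case, because $\tilde\epsilon$ has flipped sign). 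The formula for $r$ then follows from integrating \eqref{cod1r}, \eqref{cod3r}, \eqref{cod4r} along the coordinate vector fields, exactly as in Section 3. The expressions for $\gamma_1, a_1, b_1$ come from feeding $a_2,b_2,a_3,r$ into \eqref{gaua122} and \eqref{cod1r}. The second subcase $a_3^2(0)>1$, $z^2\ne -1$ is entirely parallel but with $\tilde\epsilon=+1$, $a_3=-\coth t$, and hyperbolic/trigonometric roles interchanged. The third subcase ($a_3^2(0)>1$, $z^2=-1$) is the degenerate analog of the $\epsilon=0$, $\tilde\epsilon=1$, $z^2=-1$ case already handled for $\Cx^4$; since $S(z)=0$ there, I would again switch to the variable $w=(a_6+ib_6)/(\epsilon+a_3^2)$, integrate \eqref{gauab36} to get $w=e^{k_1\pm(5i/3)s}$, and read off $a_6,b_6,r$ via \eqref{gaua43}, \eqref{gauab46}, \eqref{cod4r} with $\epsilon=-1$.

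Once all the connection coefficients have been expressed in terms of the coordinates and the four (resp. two) functions $k_i(u,v)$, the remaining equations in \eqref{Gauss} which have not been used to define the $k_i$ — essentially the ones governing the $X_1,X_2$ derivatives of the quantities that depend on $(u,v)$ alone — turn into constraints on $k_1,k_2,k_3,k_4$. A direct substitution, using \eqref{gaua126-}, \eqref{gaua121-}, \eqref{gaua121+}, and the integrability condition \eqref{coor12g1}, should collapse to the systems \eqref{kh}, \eqref{kh2}, and \eqref{kh3} in the three subcases respectively. Conversely, given any solution of the relevant PDE system, one reverses the construction: define $a_1,b_1,a_2,b_2,a_3,a_6,b_6,r$ by the displayed formulas, verify by inspection that all of the structure equations \eqref{Gauss}--\eqref{Codazzi} hold, and invoke Theorem 1.1 of the Introduction to produce a Lagrangian immersion into $\Cx H^4$, unique up to isometry of the ambient space.

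The main obstacle is the bookkeeping in the last verification: one must check that the PDEs \eqref{kh}, \eqref{kh2}, \eqref{kh3} really do capture \emph{all} the remaining Gauss equations, and that no further compatibility condition sneaks in from \eqref{gaua121-} versus \eqref{gaua126-} or from the two ways of computing $X_i X_j(r)$. This is the same kind of lengthy but mechanical check that was carried out in the $\Cx^4$ and $\Cx P^4$ cases, and the fact that the structure of the coordinate construction is independent of $\epsilon$ means no qualitatively new difficulty arises — only the explicit hyperbolic functions of $t$ and the signs of $\tilde\epsilon$ change. Finally, the dichotomy $a_3^2<1$ vs.\ $a_3^2>1$ is preserved under the flow of $X_4$ on a connected component (since $a_3^2=1$ would force $a_3$ constant by \eqref{gaua43}, contradicting the assumption that $\mathcal{N}_1$ and $\mathcal{N}_+$ are both non-integrable), so the three cases genuinely partition the possibilities.
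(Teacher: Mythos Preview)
Your proposal is correct and follows essentially the same route as the paper: the paper explicitly says ``We return to the analysis as done for $\Cx^4$, but set $\epsilon=-1$'' and then runs through the three subcases $a_3^2<1$, $a_3^2>1$ with $z^2\neq-1$, and $a_3^2>1$ with $z^2=-1$ exactly as you outline, deriving the displayed formulas for $a_2,b_2,a_3,r,a_6,b_6,\gamma_1,a_1,b_1$ and then reading off the residual PDE systems \eqref{kh}, \eqref{kh2}, \eqref{kh3} from the remaining Gauss equations. Your observation that $a_3^2+\epsilon=0$ is excluded because it forces $(a_2-ib_2)(a_6+ib_6)=0$ and hence integrability of $\mathcal{N}_1$ or $\mathcal{N}_+$ is also exactly the paper's reasoning (stated just after the definition of $\mu$ in the $\Cx^4$ section), so the trichotomy is complete.
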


\vskip 0.5cm

 \end{document}